\theoremstyle{plain}
\newtheorem {lemma}{Lemma} 
\newtheorem {theorem}[lemma]{Theorem}
\newtheorem {corollary}[lemma]{Corollary}
\newtheorem {proposition}[lemma]{Proposition}
\theoremstyle{definition}
\newtheorem {remark}[lemma]{Remark}
\newtheorem {example}[lemma]{Example}
\theoremstyle{definition}
\newtheorem{deff}[lemma]{Definition}{}
\newcommand{\op}{\operatorname{op}}
\newcommand{\M}{\operatorname{\mathbb M}}
\newcommand{\LL}{\operatorname{\mathcal L}}
\newcommand{\gr}{\operatorname{gr}}
\newcommand{\BF}{\operatorname{BF}}
\newcommand{\ga}{\gamma}
\newcommand{\Z}{\mathbb Z}
\newcommand{\N}{\mathbb N}
\newcommand{\Fdim}{\operatorname{Fdim-}}
\newcommand{\fdim}{\operatorname{fdim-}}
\newcommand{\PProj}{\operatorname{Proj}}
\newcommand{\Qcoh}{\operatorname{QCoh}}
\newcommand{\QGr}{\operatorname{QGr-}}
\newcommand{\qgr}{\operatorname{qgr-}}
\newcommand{\End}{\operatorname{End}}
\newcommand{\Hom}{\operatorname{Hom}}
\newcommand{\id}{\operatorname{id}}
\newcommand\Gr[1][]{{\operatorname{{Gr}^{#1}-}}}
\newcommand{\grr}{\operatorname{gr-}}
\newcommand{\Modd}{\operatorname{Mod-}}
\title{The dynamics of Leavitt path algebras}
\author{R.~Hazrat}\address{
School of Computing, Engineering and Mathematics\\
University of Western Sydney\\
Australia}
\email{r.hazrat@uws.edu.au}
\subjclass[2000]{16D70} 
\keywords{Path algebras, Leavitt path algebras, graded algebras, symbolic dynamics}
\begin{document}
\begin{abstract} 
Recently it was shown that the notion of flow equivalence of shifts of finite type in symbolic dynamics is related to the Morita theory and the Grothendieck group in the theory of Leavitt path algebras~\cite{flowa}. In this paper we show that the notion of conjugacy of shifts of finite type is closely related to the {\it graded} Morita theory and consequently the {\it graded} Grothendieck group. This fits into the general 
framework we have in these two theories: Conjugacy yields the flow equivalence and the graded Morita equivalence can be lifted to the Morita equivalence. 
Starting from a finite directed graph, the observation that the graded Grothendieck group of the Leavitt path algebra associated to $E$ coincides with the Krieger dimension group of the shift of finite type  associated to $E$ provides a link between the theory of Leavitt path algebras and symbolic dynamics.  It has been conjectured that the ordered graded Grothendieck group as $\mathbb Z[x,x^{-1}]$-module (we call this the graded dimension group) classifies the Leavitt path algebras completely~\cite{hazann}.  Via the above correspondence, utilising the results from symbolic dynamics, we prove that for two purely infinite simple unital Leavitt path algebras, if their graded dimension groups are isomorphic, then the algebras are isomorphic.
\end{abstract}

\maketitle

\section{Introduction}\label{intro}

One of the central objects in the theory of symbolic dynamics is a shift of finite type (i.e., a topological Markov chain). 
Every finite directed graph $E$ with no sinks and sources gives rise to a shift of finite type $X_E$ by considering the set of bi-infinite paths and the natural shift of the paths to the left. This is called an edge shift. Conversely any shift of finite type is conjugate to an edge shift. 

There are two notions of equivalences in the classifications of shifts of finite type: the conjugacy and the weaker notion of flow equivalence. 
Two shifts of finite type $X_E$ and $X_F$ are conjugate if $E$ can be obtained from $F$ by a series of in/out-splitting (see~\cite[Theorem~7.1.2, Corollary~7.1.5]{lindmarcus}). 
Furthermore, $X_E$ and $X_F$ are flow equivalent if $E$ can be obtained from $F$ by a series of in/out-splitting and expansion of graphs (see~\cite[456]{lindmarcus} and~\cite{parrysullivan}). 
 
On the other hand, to a directed graph one can associate an analytical object, called a graph $C^*$-algebra~\cite{Raegraph} and an algebraic object called, a Leavitt path algebras~\cite{aap05,amp}.

The relation between symbolic dynamics and (graph) $C^*$-algebras were explored in the work of Cuntz and Krieger~\cite{cuntzkrieger} and later Bates and Pask~\cite{batespask}, among others. The algebraic counter part of graph $C^*$-algebras, i.e., Leavitt path algebras, are related to symbolic dynamics by the recent work of Abrams, Louly, Pardo and Smith~\cite{flowa}. In both settings, it was observed that the notion of flow equivalence is related to the theory of Morita equivalence and the Grothendieck group $K_0$. It was shown that for two essential finite graphs $E$ and $F$, if $X_E$ is flow equivalent to $X_F$, then $C^*(E)$ is Morita equivalent to $C^*(F)$ and $\LL(E)$ is Morita equivalent to $\LL(F)$ (see~\cite{batespask} and~\cite{flowa}, respectively). 

Inspired by this connection between symbolic dynamics and graph algebras, in this note we show that, whereas, the flow equivalence is related to the Morita theory and the Grothendieck group $K_0$,  the notion of conjugacy in symbolic dynamics is closely related to the {\it graded} Morita theory and consequently the {\it graded} Grothendieck group $K^{\gr}_0$ in the setting of Leavitt path algebras. This fits into the general framework we have in these two theories: Conjugacy yields the flow equivalence and the graded Morita equivalence can be lifted to the Morita equivalence (see~\S\ref{hgdeiii}). 

Thanks to the deep work of Williams~\cite{williams}, there is a matrix criterion when two edge shifts are conjugate; $X_E$ and $X_F$  are conjugate if and only if the adjacency matrices of $E$ and $F$ are strongly shift equivalent (see~\S\ref{willi}). Williams further introduced a weaker notion of the shift equivalence. Inspired by the success of $K$-theory in classification of AF $C^*$-algebras~\cite{elliot}, Krieger introduced a variation of $K_0$ which he showed is a complete invariant for the shift equivalence~\cite{krieger1}.  It can be observed that Krieger's dimension group (and Wagoner's dimension module~\cite{wago1,wago2}) coincides with the graded Grothendieck group of associated Leavitt path algebras. This provides a bridge from the theory of Leavitt path algebras to symbolic dynamics. 

Throughout this paper, we only will consider finite graphs with no sinks (and no sources in several occasions). The reason is, considering the edge shift $X_E$ of a graph $E$, no arrow that begins at a source and no arrow that ends at a sink appear in any bi-infinite path (see the paragraph before the definition of essential graphs~\cite[Definition~2.2.9]{lindmarcus}). Thus only graphs with no sinks and sources appear in symbolic dynamics. Furthermore, one of the most interesting classes of Leavitt path algebras, i.e., purely infinite simple unital algebras are within this class of graphs.

The results of the paper are summarised as follows. Let $E$ and $F$ be two finite directed graphs with no sinks and sources and $A_E$ and $A_F$ be their adjacency matrices, respectively. 

\begin{description}

\item[Corollary~\ref{h99}] The matrices $A_E$ and $A_F$ are shift equivalent if and only if there is an order preserving $\mathbb Z[x,x^{-1}]$-module  isomorphism
$K_0^{\gr}(\LL(E)) \cong_{\gr} K_0^{\gr}(\LL(F))$.

\medskip

\item[Proposition~\ref{hgysweet}] If $A_E$ and $A_F$  are strongly shift equivalent then $\LL(E)$ and $\LL(F)$ are graded Morita equivalent. Conversely, if $\LL(E)$ and $\LL(F)$ are graded Morita equivalent, then  $A_E$ and $A_F$  are shift equivalent.

\item[Example~\ref{hhyyuu}] The strongly shift equivalence does not imply isomorphisms between Leavitt path algebras.

\medskip

\item[Theorem~\ref{thucomingtobal}] Suppose $\LL(E)$ and $\LL(F)$ are purely infinite simple unital algebras. Then $\LL(E) \cong \LL(F)$ if  there is an order preserving $\mathbb Z[x,x^{-1}]$-module isomorphism
\begin{equation*}
\big (K_0^{\gr}(\LL(E)),[\LL(E)]\big ) \cong \big (K_0^{\gr}(\LL(F)),[\LL(F)]\big ).
\end{equation*}

\medskip

For a graph $E$, let $\mathcal P(E)$ be the associated path algebra and $\Gr \mathcal P(E)$ be the  category of $\mathbb Z$-graded right $\mathcal P(E)$-modules and  $\Fdim \mathcal P(E)$ be its full (Serre) subcategory of modules that are the sum of their finite-dimensional submodules. Paul Smith~\cite{smith1,smith2,smith3} recently studied the 
quotient category
\[\QGr \mathcal P(E) := \Gr \mathcal P(E) / \Fdim \mathcal P(E).\]

\item[Theorem~\ref{algnihy}] There is an ordered abelian group isomorphism  $K^{\gr}_0(\LL(E)) \cong K^{\gr}_0(\LL(F))$  if and only if 
$\QGr \mathcal P(E)  \approx \QGr \mathcal P(F)$.
\end{description}

The main aim of this paper is to provide evidence that the graded Grothendieck group is a capable invariant which could eventually provide a bridge between the theories of graph $C^*$-algebras and Leavitt path algebras via symbolic dynamics. This group comes equipped with an pre-ordered abelian group structure, plus an action of $\mathbb Z$ on it, (which corresponds to the shifting in a shift space), which makes it a $\mathbb Z[x,x^{-1}]$-module, and a distinguished element, namely the identity (see~\S\ref{gftegtds}). In~\S\ref{oo12} we show that there is a product formula for this $K$-group. Section~\ref{willi} relates it to conjugacy in symbolic dynamics. And in~\S\ref{noncomui} we show that only the pre-order part of this group is sufficient to classify the the quotient category of path algebras. 

The paper is organised as follows. In~\S\ref{hgdeiii} we recall the notion of graded Morita theory. 
For an arbitrary group $\Gamma$, one can equip the Leavitt path algebra $\LL(E)$ associated to the graph $E$ with a $\Gamma$-graded structure. This is recalled in~\S\ref{poidget}. The graded Grothendieck group as an invariant for classification of Leavitt path algebras was first considered in~\cite{hazann}. In~\S\ref{gftegtds} we recall this group. In fact the graded Grothendieck group is not only an ordered abelian group, but has $\Z[x,x^{-1}]$-modules structure. The action of $x$ on the group captures the shifting in the corresponding shift of finite type. In~\S\ref{jjhh1} we observe that for a finite directed graph, the graded Grothendieck group of the Leavitt path algebra associated to $E$ coincides with the Krieger dimension group of the shift of finite type associated to $E$. This provides a link between the theory of Leavitt path algebras and symbolic dynamics. This has also been recently observed by Ara and Pardo, using a different approach in~\cite{arapardo}.  In Section~\ref{oo12} we study the behaviour of the graded Grothendieck group on the product of the graphs. We will establish a formula to express $K^{\gr}_0$ of the product of the graphs as the tensor product of $K^{\gr}_0$ of the graphs. 
Section~\ref{willi} is the main part of the paper, where the relations between strongly shift equivalence and graded Morita equivalence are studied. 
Section~\ref{noncomui} shows that the graded Grothendieck group is a complete invariant for the quotient category of  path algebras. In fact, here, we only need the structure of ordered abelian group (not the module structure) of the graded Grothendieck group to classify the quotient categories.

\section{Graded Morita theory}\label{hgdeiii}

For a graded ring $A$, the graded Grothendieck group $K^{\gr}_0(A)$ is constructed from the category of graded finitely generated projective $A$-modules. Thus it is natural to consider categories of graded modules which are equivalent, which in turn induces isomorphic graded Grothendieck groups. We will see that graded equivalence of categories are closely related to the notion of shift equivalence in symbolic dynamics in~\S\ref{willi}. 

In this section we gather results on graded Morita theory that we need in the paper. For the theory of graded rings, we refer the reader to~\cite{grrings}. 

For an abelian group $\Gamma$ and a $\Gamma$-graded ring $A$, by $\Gr A$, we denote a category consists of graded right $A$-modules as objects and  graded homomorphisms as the morphisms.  For $\alpha \in \Gamma$,  the {\it $\alpha$-suspension functor} or {\it shift functor} $\mathcal T_\alpha:\Gr A\rightarrow \Gr A$, $M \mapsto M(\alpha)$  is an isomorphism with the property
$\mathcal T_\alpha \mathcal T_\beta=\mathcal T_{\alpha + \beta}$, $\alpha,\beta\in \Gamma$.

\begin{deff} \label{grdeffsa} 
Let $A$ and $B$ be $\Gamma$-graded rings. 

\begin{enumerate}

\item A functor $\phi:\Gr A \rightarrow \Gr B$ is called a \emph{graded functor} if $\phi \mathcal T_{\alpha} = \mathcal T_{\alpha} \phi$. 

\medskip 

\item A graded functor $\phi:\Gr A \rightarrow \Gr B$ is called a \emph{graded equivalence} if there is a graded functor $\psi:\Gr B \rightarrow \Gr A$ such that $\psi \phi \cong_{\gr} 1_{\Gr A}$ and $\phi \psi \cong_{\gr} 1_{\Gr B}$. 

\medskip

\item If there is a graded equivalence between $\Gr A$ and $\Gr B$, we say $A$ and $B$ are {\it graded equivalent} or {\it graded Morita equivalent} and we write 
$\Gr A \approx_{\gr} \Gr B$, or $\Gr[\Gamma] A \approx_{\gr} \Gr[\Gamma] B$ to emphasis the categories are $\Gamma$-graded.

\medskip 

\item A functor $\phi: \Modd A \rightarrow \Modd B$ is called a \emph{graded functor} if  there is a graded functor $\phi': \Gr A \rightarrow \Gr B$ such that the following diagram, where the vertical functors are forgetful functors, commutes.
\begin{equation}\label{njhysi}
\xymatrix{
\Gr A \ar[r]^{\phi'} \ar[d]_{F}& \Gr B \ar[d]^{F}\\
\Modd A \ar[r]^{\phi}  & \Modd B.
}
\end{equation}

The functor $\phi'$ is called an \emph{associated graded functor} of $\phi$. 
\medskip 

\item A functor $\phi:\Modd A \rightarrow \Modd B$ is called a \emph{graded equivalence} if it is graded and an equivalence. 

\end{enumerate}
\end{deff}

For a ring $A$, and a full idempotent element $e\in A$ (i.e., $e^2=e$ and $AeA=A$), it is well-know that the ring $A$ is Morita equivalent to $eAe$. In Example~\ref{idempogr} we establish a similar statement in the graded setting which will be used in Proposition~\ref{valenjov} and Theorem~\ref{cafejen1}. 

\begin{example}\label{idempogr}
Let $A$ be a graded ring and $e$ be a full homogeneous idempotent of $A$, i.e., $e^2=e$ and $AeA=A$. Clearly $e$ has  degree zero. 
Consider $P=eA$. One can readily see that $P$ is a right graded progenerator. Then $P^*=\Hom_A(eA,A)\cong_{\gr}Ae$ and $B=\End_A(eA,eA)\cong_{\gr} eAe$. 
The $A-A$-bimodule graded homomorphism $\phi:Ae\otimes_{eAe}eA \rightarrow A$ and the $eAe-eAe$-bimodule graded homomorphism  $\psi:eA\otimes_A Ae \rightarrow eAe$ are isomorphic. 
Consequently one can check that the functors $-\otimes_A Ae: \Gr A\rightarrow \Gr eAe$ and $-\otimes_{eAe} eA: \Gr eAe \rightarrow \Gr A$ are inverse of each other.
 Thus we get an (graded) equivalence between $\Gr A$ and $\Gr eAe$ which lifts to an (graded) equivalence between $\Modd A$ and $\Modd eAe$, as it is shown in the diagram below.
\begin{equation*}
\xymatrix{
\Gr A \ar[rr]^{-\otimes_A Ae} \ar[d]_{F}&& \Gr eAe \ar[d]^{F}\\
\Modd A \ar[rr]^{-\otimes_A Ae}  && \Modd eAe.
}
\end{equation*}
\end{example}

Example~\ref{idempogr} shows that a graded equivalence between the categories $\Gr A$ and $\Gr eAe$ can be lifted to an equivalence between the categories $\Modd A$ and $\Modd eAe$. This lifting of the graded equivalence is a general phenomena as proved by Gordon and Green~\cite[Proposition~5.3, Theorem~5.4]{greengordon}. Since we need this result in several occasions, we record it here. 

\begin{theorem}\label{grmorim11} 
Let $A$ and $B$ be two $\Gamma$-graded rings. The following are equivalent:

\begin{enumerate}[\upshape(1)]

\item $\Modd A$ is graded equivalent to $\Modd B$;

\item $\Gr A$ is graded equivalent to $\Gr B$;

\item $B\cong_{\gr} \End_A(P)$ for a graded $A$-progenerator $P$;

\item $B\cong_{\gr} e \M_n(A)(\overline \delta) e$ for a full homogenous idempotent $e \in \M_n(A)(\overline \delta)$, where $\overline \delta=(\delta_1,\dots,\delta_n)$, $\delta_i \in \Gamma$.  
\end{enumerate}
 \end{theorem}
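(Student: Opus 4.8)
The plan is to take condition (3) as a hub and establish $(3)\Rightarrow(1)$, $(3)\Rightarrow(2)$, $(3)\Rightarrow(4)$ together with the reverse implications $(1)\Rightarrow(3)$, $(2)\Rightarrow(3)$, $(4)\Rightarrow(3)$. Almost all of this is the ungraded Morita theorem plus the bookkeeping needed to see that the functors, bimodules and natural transformations involved commute with the suspensions $\mathcal T_\alpha$; the one genuinely delicate point is reconciling the two senses of ``graded functor'' in Definition~\ref{grdeffsa}. Here I take a \emph{graded $A$-progenerator} to be a finitely generated graded projective right $A$-module $P$ with trace ideal equal to $A$ --- equivalently, $A$ is a graded direct summand of a finite direct sum of shifts of $P$; equivalently, the homogeneous idempotent cutting $P$ out of a graded free module of finite rank is full.

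$(3)\Rightarrow(2)$ and $(3)\Rightarrow(1)$: given such a $P$ with $B\cong_{\gr}\End_A(P)$, finite generation of $P$ gives $\HOM_A(P,-)=\Hom_A(P,-)$, so the classical Morita equivalences $\Hom_A(P,-)\colon\Gr A\to\Gr B$ and $-\otimes_B P\colon\Gr B\to\Gr A$ are graded functors, and the classical unit and counit are homogeneous; hence they are inverse graded equivalences, which is $(2)$, and over the forgetful functors they are the associated graded functors of a graded equivalence $\Modd A\to\Modd B$, which is $(1)$. $(3)\Rightarrow(4)$: since $P$ is finitely generated and graded projective, $P\oplus Q\cong_{\gr}\bigoplus_{i=1}^{n}A(\delta_i)$ for suitable $\delta_i\in\Gamma$; the projection onto $P$ is a homogeneous idempotent $e$ of $\M_n(A)(\overline\delta)$, the matrix ring carrying the grading that makes it the graded endomorphism ring of $\bigoplus_{i=1}^{n}A(\delta_i)$; the progenerator hypothesis on $P$ is exactly fullness of $e$; and $\End_A(P)\cong_{\gr}e\,\M_n(A)(\overline\delta)\,e$, whence $(4)$. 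And $(4)\Rightarrow(3)$ is immediate with $P=e\big(\bigoplus_{i=1}^{n}A(\delta_i)\big)$: it is a graded summand of a graded free module (so finitely generated and graded projective), is a progenerator by fullness of $e$, and satisfies $\End_A(P)\cong_{\gr}e\,\M_n(A)(\overline\delta)\,e\cong_{\gr}B$.

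$(2)\Rightarrow(3)$: let $\phi\colon\Gr A\to\Gr B$ be a graded equivalence with graded quasi-inverse $\psi$, and put $P=\psi(B)$. Because $\psi$ is graded, $\psi(B(\alpha))=P(\alpha)$, so for every $\alpha\in\Gamma$ there are natural isomorphisms $\Hom_{\Gr A}(P,P(\alpha))\cong\Hom_{\Gr B}(B,B(\alpha))=B_\alpha$, and summing over $\alpha$ gives a graded ring isomorphism $B\cong_{\gr}\End_A(P)$ (legitimate as $P$ is finitely generated). Now $A\in\Gr A$ and $B\in\Gr B$ are small projective objects --- $\Hom_{\Gr A}(A,-)$ is the degree-zero-part functor, which preserves coproducts --- and a graded equivalence carries small projectives to small projectives, which in $\Gr A$ and in $\Gr B$ are exactly the finitely generated graded projectives; hence $P$ and $\phi(A)$ are finitely generated and graded projective. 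Finally $P$ is a progenerator because $\phi(A)$, being finitely generated, is a graded quotient of some $\bigoplus_i B(\beta_i)$, so applying $\psi$ yields a graded epimorphism $\bigoplus_i P(\beta_i)\twoheadrightarrow A$, which forces the trace ideal of $P$ to be $A$.

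Finally $(1)\Rightarrow(3)$, which I expect to be the main obstacle. Let $\phi\colon\Modd A\to\Modd B$ be a graded equivalence with an associated graded functor $\phi'\colon\Gr A\to\Gr B$, and set $M=\phi(A)$, an $A$-$B$-bimodule with the right $B$-action from $\phi$ and the left $A$-action gotten by applying $\phi$ to the left multiplications on $A$. This left action is graded: for $a\in A_\gamma$, left multiplication by $a$ is a degree-zero graded map $A\to A(\gamma)$, so $\phi'$ carries it to a degree-zero map $M=\phi'(A)\to\phi'(A)(\gamma)=M(\gamma)$. Thus $M$ is a graded bimodule; by ungraded Morita theory $M$ is an $A$-progenerator, $\phi\cong-\otimes_A M$, and $B\cong\End_A(M)$ via right multiplication, and since right multiplication by $b\in B_\gamma$ lies in $\Hom_{\Gr A}(M,M(\gamma))$ this isomorphism is homogeneous, so $B\cong_{\gr}\End_A(M)$ and $(3)$ holds with $P=M$. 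The reason this step is delicate is that Definition~\ref{grdeffsa}(4)--(5) posits only the \emph{existence} of an associated graded functor, so one must check that the whole Morita context --- quasi-inverse, unit and counit --- can be taken graded; the bimodule computation above supplies this, and it is in any case Gordon and Green's~\cite[Proposition~5.3, Theorem~5.4]{greengordon}.
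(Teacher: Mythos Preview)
The paper does not supply a proof of this theorem: it is recorded without argument and attributed to Gordon and Green~\cite[Proposition~5.3, Theorem~5.4]{greengordon}. Your write-up is essentially the standard proof and is correct in substance; the treatment of $(2)\Rightarrow(3)$ via small projectives and of $(3)\Leftrightarrow(4)$ via graded free presentations is clean and would serve well as the missing proof.

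There is one bookkeeping slip in your $(1)\Rightarrow(3)$. With $M=\phi(A)=\phi'(A)$ you correctly produce a graded \emph{left}-$A$, right-$B$ bimodule, and your argument that the left $A$-action is homogeneous (because $\phi'$ commutes with suspensions) is exactly the point. But then $M$ is a left $A$-progenerator, and right multiplication $b\mapsto(\,m\mapsto mb\,)$ is an \emph{anti}-homomorphism $B\to\End_{A\text{-}\mathrm{Mod}}(M)$, so what you get is $B\cong\End_{A\text{-}\mathrm{Mod}}(M)^{\op}$, not $B\cong\End_A(M)$. Condition~(3), in the paper's right-module conventions (cf.\ Example~\ref{idempogr}, where $\End_A(eA)\cong eAe$), asks for a \emph{right} $A$-progenerator $P$. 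The fix is immediate once $M$ is known to be a graded Morita bimodule: take $P=\Hom_B(M,B)$, which is then a graded $(B,A)$-bimodule, a graded right $A$-progenerator, with $\End_A(P)\cong_{\gr}B$; equivalently, observe that $-\otimes_A M$ and its quasi-inverse $-\otimes_B P$ are now a graded equivalence in the sense of Definition~\ref{grdeffsa}(2), and invoke your own $(2)\Rightarrow(3)$ with $P=\psi(B)$.
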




\section{Grading on Leavitt path algebras}\label{poidget}

For an arbitrary group $\Gamma$, one can equip $\LL(E)$ with a $\Gamma$-graded structure. This will be needed in the note (see the proof of Theorem~\ref{cafejen1}). 
We first recall the definition of a Leavitt path algebra associated to a graph.

\begin{deff}\label{llkas}{\sc Leavitt path algebras.} \label{LPA} \\
For a row-finite graph $E$ and a ring $R$ with identity, the {\it Leavitt path algebra of $E$}, denoted by $\LL_R(E)$, is the algebra generated by the sets $\{v \mid v \in E^0\}$, $\{ \alpha \mid \alpha \in E^1 \}$ and $\{ \alpha^* \mid \alpha \in E^1 \}$ with the coefficients in $R$, subject to the relations 

\begin{enumerate}
\item $v_iv_j=\delta_{ij}v_i \textrm{ for every } v_i,v_j \in E^0$.

\item $s(\alpha)\alpha=\alpha r(\alpha)=\alpha \textrm{ and }
r(\alpha)\alpha^*=\alpha^*s(\alpha)=\alpha^*  \textrm{ for all } \alpha \in E^1$.

\item $\alpha^* \alpha'=\delta_{\alpha \alpha'}r(\alpha)$, for all $\alpha, \alpha' \in E^1$.

\item $\sum_{\{\alpha \in E^1, s( \alpha)=v\}} \alpha \alpha^*=v$ for every $v\in E^0$ for which $s^{-1}(v)$ is non-empty.

\end{enumerate}
\end{deff}
Here the ring $R$ commutes with the generators $\{v,\alpha, \alpha^* \mid v \in E^0,\alpha \in E^1\}$. Throughout this note the coefficient ring is a fixed field $K$ and we simply write $\LL(E)$ instead of $\LL_K(E)$. The elements $\alpha^*$ for $\alpha \in E^1$ are called {\it ghost edges}. One can show that $\LL(E)$ is a ring with identity if and only if the graph $E$ is finite (otherwise, $\LL(E)$ is a ring with local identities).

Let $\Gamma$ be an arbitrary group with the identity element $e$. Let $w:E^1\rightarrow \Gamma$ be a {\it weight} map and further define $w(\alpha^*)=w(\alpha)^{-1}$, for $\alpha \in E^1$ and $w(v)=e$ for $v\in E^0$.  The free $K$-algebra generated by the vertices, edges and ghost edges is a $\Gamma$-graded $K$-algebra. Furthermore, the Leavitt path algebra is the quotient of this algebra by relations in  Definition~\ref{LPA} which are all homogeneous. Thus $\LL_K(E)$ is a $\Gamma$-graded $K$-algebra. 

\begin{example}
Consider the graphs 
 \begin{equation*}
\xymatrix{
E: & \bullet \ar[r]^f &  \bullet \ar@(rd,ru)_{e} & & F:  &  \bullet \ar@/^1.5pc/[r]^g & \bullet \ar@/^1.5pc/[l]^h & \\
}
\end{equation*}
Assigning $0$ to vertices and $1$ to edges in the graphs in the usual manner, by~\cite[Theorem~4.2]{haz} we obtain
$\LL(E)\cong_{\gr}\M_2(K[x,x^{-1}])(0,1)$ whereas $\LL(F)\cong_{\gr}\M_2(K[x^2,x^{-2}])(0,1)$ and one can easily observe that $\LL_K(E)\not \cong_{\gr} \LL_K(F)$.

However assigning $1$ for the degree of  $f$ and $2$ for the degree of $e$ in $E$ and $1$ for the degrees of $g$ and $h$ in $F$, the proof of~\cite[Theorem~4.2]{haz} shows that
$\LL_K(E)\cong \M_2(K[x^2,x^{-2}])(0,1)$ and $\LL_K(F)\cong \M_2(K[x^2,x^{-2}])(0,1)$. So with these gradings, $\LL_K(E)\cong_{\gr} \LL_K(F)$. 
\end{example}

The natural and standard grading given to a Leavitt path algebra is a $\mathbb Z$-grading by setting $\deg(v)=0$, for $v\in E^0$, $\deg(\alpha)=1$ and $\deg(\alpha^*)=-1$ for $\alpha \in E^1$. 
If $\mu=\mu_1\dots\mu_k$, where $\mu_i \in E^1$, is an element of $\LL(E)$, then we denote by $\mu^*$ the element $\mu_k ^*\dots \mu_1^* \in \LL(E)$. Further we define $v^*=v$ for any $v\in E^0$. Since $\alpha^* \alpha'=\delta_{\alpha \alpha'}r(\alpha)$, for all $\alpha, \alpha' \in E^1$, any word in the generators $\{v, \alpha, \alpha^* \mid v\in E^0, \alpha \in E^1   \}$ in $\LL(E)$ can be written as $\mu \gamma ^*$ where $\mu$ and $\gamma$ are paths in $E$ (vertices are considered paths of length zero).  The elements of the form $\mu\gamma^*$ are called {\it monomials}. 

Taking the grading into account, one can write $\LL(E) =\textstyle{\bigoplus_{k \in \mathbb Z}} \LL(E)_k$ where,
\[\LL(E)_k=  \Big \{ \sum_i r_i \alpha_i \beta_i^*\mid \alpha_i,\beta_i \textrm{ are paths}, r_i \in K, \textrm{ and } |\alpha_i|-|\beta_i|=k \textrm{ for all } i \Big\}.\] 
The following theorem was proved in~\cite{haz} which determined finite graphs whose associated Leavitt path algebras are strongly $\mathbb Z$-graded (see also~\cite{haziso} for another proof by realising Leavitt path algebras as corner skew Laurent polynomial rings).

\begin{theorem}\label{sthfin}
Let $E$ be a finite graph.  Then $L(E)$ is strongly graded if and only if  $E$ does not have sinks.
\end{theorem}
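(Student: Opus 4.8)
The plan is to apply the standard criterion that a unital $\Z$-graded ring $A=\bigoplus_{n\in\Z}A_n$ is strongly graded if and only if $A_1A_{-1}=A_0$ and $A_{-1}A_1=A_0$ (see~\cite{grrings}). Since $E$ is finite, $\LL(E)$ is a unital ring with $1=\sum_{v\in E^0}v$, so this criterion applies with $A=\LL(E)$, and the theorem reduces to deciding exactly when both of these products exhaust $\LL(E)_0$.

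I will prove ``strongly graded $\Rightarrow$ no sinks'' by contraposition. If $v_0\in E^0$ is a sink, then $\LL(E)_1$ is spanned by monomials $\mu\ga^*$ with $|\mu|=|\ga|+1\ge 1$, and $v_0\mu=0$ for every path $\mu$ of positive length because $v_0$ emits no edge; hence $v_0\,\LL(E)_1=0$. Consequently, from $v_0=\sum_i x_iy_i$ with $x_i\in\LL(E)_1$ we would get $v_0=v_0^2=\sum_i(v_0x_i)y_i=0$, contradicting $v_0\ne 0$ (vertices are nonzero idempotents in $\LL(E)$). So $v_0\notin\LL(E)_1\LL(E)_{-1}$, thus $\LL(E)_1\LL(E)_{-1}\subsetneq\LL(E)_0$, and $\LL(E)$ is not strongly graded.

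Conversely, assume $E$ has no sinks. Then the last defining relation $\sum_{s(\alpha)=v}\alpha\alpha^*=v$ is available at every vertex, so $\sum_{\alpha\in E^1}\alpha\alpha^*=\sum_{v\in E^0}v=1$; this puts $1\in\LL(E)_1\LL(E)_{-1}$, whence $\LL(E)_0=\LL(E)_1\LL(E)_{-1}$. A routine induction on $n$ (applying that relation at the range of each path of length $n$) upgrades this to $\sum_{|\mu|=n}\mu\mu^*=1$ for every $n\ge 0$. What remains, and what I expect to be the only real difficulty, is the other equality $\LL(E)_{-1}\LL(E)_1=\LL(E)_0$: it is not handed to us by the defining relations, and the obvious move $r(\alpha)=\alpha^*\alpha\in\LL(E)_{-1}\LL(E)_1$ only reaches the vertices that are ranges of edges, so it misses the sources, which the theorem explicitly permits.

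The idea to finish is to ``go out and come back''. For paths $\mu,\nu$ with $|\nu|=|\mu|+1$ and $r(\nu)=r(\mu)$, the identities $\nu^*\nu=r(\nu)$ and $\mu\,r(\mu)=\mu$ give $\mu\mu^*=\mu(\nu^*\nu)\mu^*=(\mu\nu^*)(\nu\mu^*)$, where $\mu\nu^*\in\LL(E)_{-1}$ and $\nu\mu^*\in\LL(E)_1$; so $\mu\mu^*\in\LL(E)_{-1}\LL(E)_1$ whenever $r(\mu)$ is the range of some path one edge longer than $\mu$. To secure this for all paths of a single fixed length I will use that the sets $V_n=\{r(\mu):\mu\text{ a path with }|\mu|=n\}\subseteq E^0$ satisfy $V_{n+1}\subseteq V_n$ (delete the first edge of a path), so the decreasing chain $(V_n)_{n\ge 0}$ of subsets of the finite set $E^0$ stabilises; choosing $n$ with $V_n=V_{n+1}$, every path $\mu$ of length $n$ has $r(\mu)\in V_n=V_{n+1}$, hence admits an admissible $\nu$, so $\mu\mu^*\in\LL(E)_{-1}\LL(E)_1$. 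Summing over these $\mu$ yields $1=\sum_{|\mu|=n}\mu\mu^*\in\LL(E)_{-1}\LL(E)_1$, hence $\LL(E)_0=\LL(E)_{-1}\LL(E)_1$, and $\LL(E)$ is strongly graded.
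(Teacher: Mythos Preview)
Your argument is correct. The contrapositive direction is straightforward, and for the converse you handle the genuinely nontrivial inclusion $\LL(E)_{-1}\LL(E)_1=\LL(E)_0$ with a clean stabilisation trick: the chain $V_n=\{r(\mu):|\mu|=n\}$ is decreasing, so for some $n$ every path of length $n$ ends at a vertex also reached by a path of length $n+1$, and then $\mu\mu^*=(\mu\nu^*)(\nu\mu^*)\in\LL(E)_{-1}\LL(E)_1$ for each such $\mu$. Summing gives $1\in\LL(E)_{-1}\LL(E)_1$. All the algebraic identities you invoke hold, and the appeal to the criterion $1\in A_1A_{-1}$ and $1\in A_{-1}A_1$ for a unital $\Z$-graded ring is standard.

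As for comparison: the present paper does not actually prove this theorem. It is quoted from~\cite{haz}, and the paper also points to~\cite{haziso} for an alternative proof via the realisation of $\LL(E)$ as a corner skew Laurent polynomial ring. Your proof is a direct, self-contained argument using only the Cuntz--Krieger relations and the finiteness of $E^0$; the stabilisation step for $\LL(E)_{-1}\LL(E)_1$ is exactly the device that accommodates sources, which is the only place where a naive ``$r(\alpha)=\alpha^*\alpha$'' argument would fall short. So your route is elementary and transparent, whereas the approach via corner skew Laurent rings in~\cite{haziso} is more structural and yields the result as a by-product of a broader description of $\LL(E)$.
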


This theorem along with Dade's theorem (see~\S\ref{volkhtes5}) will be used throughout this paper to pass from the graded $K$-theory to the non-graded $K$-theory of ring of homogeneous elements of degree zero.

\section{Graded Grothendieck groups and graded dimension groups}\label{gftegtds}

For an abelian monoid $V$, we denote by $V^+$ the group completion of $V$. This gives a left adjoint functor to the forgetful functor from the category of abelian groups to abelian monoids. 
When the monoid $V$ has a $\Gamma$-module structure, where $\Gamma$ is a group, then $V^+$ inherits a natural $\Gamma$-module structure, or equivalently,  $\Z[\Gamma]$-module structure. 

The graded Grothendieck group of a graded ring is constructed as the completion of the abelian monoid of isomorphic classes of graded finitely generated projective modules.  Namely,  for a $\Gamma$-graded ring $A$ and a graded finitely generated projective (right) $A$-module $P$, let $[P]$ denote the class of graded finitely generated projective modules graded isomorphic to $P$. Then the monoid 
\begin{equation}\label{zhongshan1}
\mathcal V^{\gr}(A)=\big \{[P] \mid  P  \text{ is graded finitely generated projective A-module} \big \}
\end{equation}
has a $\Gamma$-module structure defined as follows: for $\ga \in \Gamma$ and $[P]\in \mathcal V^{\gr}(A)$, $\ga .[P]=[P(\ga)]$. The group $\mathcal V^{\gr}(A)^+$ is called the {\it graded Grothendieck group} and is denoted by $K^{\gr}_0(A)$, which 
as  the above discussion shows is a $\Z[\Gamma]$-module. This extra $\Z[\Gamma]$-module carries a substantial information about the graded ring $A$. 

The main aim of this note is to concentrate on the graded Grothendieck group of Leavitt path algebras as a possible invariant for these algebras. This line of study started in~\cite{hazann}. In the case of finite graphs with no sinks, there is a good description of the action of the group on the graded Grothendieck group which we recall here.

Let $E$ be a finite graph with no sinks. Set $\mathcal  A=\LL(E)$ which is a strongly $\mathbb Z$-graded ring by Theorem~\ref{sthfin}.  For any $u \in E^0$ and $i \in \mathbb Z$, $u\mathcal  A(i)$ is a right graded finitely generated projective $\mathcal  A$-module and any graded finitely generated projective $\mathcal  A$-module is generated by these modules up to isomorphism, i.e., 
$$\mathcal V^{\gr}(\mathcal  A)=\Big \langle \big [u\mathcal  A(i)\big ]  \mid u \in E^0, i \in \mathbb Z \Big \rangle, 
$$ and $K_0^{\gr}(\mathcal  A)$ is the group completion of $\mathcal V^{\gr}(\mathcal  A)$. The action of $\mathbb N[x,x^{-1}]$ on $\mathcal V^{\gr}(\mathcal A)$ and thus the action of $\mathbb Z[x,x^{-1}]$ on $K_0^{\gr}(\mathcal  A)$ is defined on generators by $x^j [u\mathcal  A(i)]=[u\mathcal  A(i+j)]$, where $i,j \in \mathbb Z$. We first observe that for $i\geq 0$, 
\begin{equation}\label{hterw}
x[u\mathcal  A(i)]=[u\mathcal  A(i+1)]=\sum_{\{\alpha \in E^1 \mid s(\alpha)=u\}}[r(\alpha)\mathcal  A(i)].
\end{equation}

First notice that for $i\geq 0$, $\mathcal  A_{i+1}=\sum_{\alpha \in E^1} \alpha \mathcal  A_i$. It follows \[u \mathcal  A_{i+1}=\bigoplus_{\{\alpha \in E^1 \mid s(\alpha)=u\}} \alpha \mathcal  A_i\] as $\mathcal  A_0$-modules. Using the fact that $\mathcal  A_n\otimes_{\mathcal  A_0}\mathcal  A\cong \mathcal  A(n)$, $n \in \mathbb Z$, and the fact that $\alpha \mathcal  A_i \cong r(\alpha) \mathcal  A_i$ as $\mathcal  A_0$-module,
we get \[u\mathcal  A(i+1) \cong \bigoplus_{\{\alpha \in E^1 \mid s(\alpha)=u\}} r(\alpha) \mathcal  A(i)\] as graded $\mathcal  A$-modules. This gives~(\ref{hterw}).

\subsection{} \label{conjisi}
Let $V$ be an abelian monoid, $\Gamma$ a group and $V$ be a (left) $\Gamma$-module. Let $\geq$ be a reflexive and transitive relation on $V$ which respects the monoid and the module structures, i.e., for $\gamma \in \Gamma$ and $x,y,z \in V$, if $x\geq y$, then $x+z\geq y+z$ and $\gamma x \geq \gamma y$. We call $V$ a {\it $\Gamma$-pre-ordered module}. We call $V$ a {\it pre-ordered module} when $\Gamma$ is clear from the context. The {\it cone} of $V$ is defined as $\{x \in V \mid x\geq 0\}$ and denoted by $V_{+}$.  The set $V_{+}$ is a $\Gamma$-submonoid of $V$, i.e., a submonoid which is closed under the action of $\Gamma$.  In fact, $V$ is a $\Gamma$-pre-ordered module if and only if there exists a $\Gamma$-submonoid of $V$. (Since $V$ is a $\Gamma$-module, it can be  considered as a $\mathbb Z[\Gamma]$-module.)  An element $u \in V_{+}$ is called an {\it order-unit} if for any $x\in V$, there are $\alpha_1,\dots,\alpha_n \in \Gamma$, $n \in \mathbb N$, such that $\sum_{i=1}^n \alpha_i  u \geq x$.  As usual, in this setting, we only consider homomorphisms which preserve the pre-ordering, i.e., a $\Gamma$-homomorphism $f:V\rightarrow W$, such that $f(V_{+}) \subseteq W_{+}$.

For a $\Gamma$-graded ring $A$, $K^{\gr}_0(A)$ is a pre-ordered abelian group with  the set of isomorphic classes of  graded finitely generated  projective right $A$-modules  as the cone of ordering, denoted by $K^{\gr}_0(A)_+$  (i.e., the image of $\mathcal V^{\gr}(A)$ under the natural homomorphism $\mathcal V^{\gr}(A)\rightarrow K^{\gr}_0(A)$). Furthermore, $[A]$ is an order-unit. We call the triple, 
$(K^{\gr}_0(A),K^{\gr}_0(A)_+,[A])$ the {\it graded dimension group} (see~\S\ref{jjhh1} for some background on dimension groups). 

In~\cite{hazann} it was conjectured that the graded dimension group is a complete invariant for Leavitt path algebras. Namely,  for graphs  $E$ and $F$,  $\LL(E)\cong_{\gr} \LL(F)$ if and only if 
there is an order preserving $\mathbb Z[x,x^{-1}]$-module  isomorphism 
\begin{equation}\label{ookjh2}
\phi: K_0^{\gr}(\LL(E)) \rightarrow  K_0^{\gr}(\LL(F))
\end{equation}
such that $\phi([\LL(E)]=\LL(F)$. 
 We denote such an order preserving $\mathbb Z[x,x^{-1}]$-module isomorphism which preserve the position of identity by 
 \begin{equation*}
\big (K_0^{\gr}(\LL(E)),[\LL(E)]\big ) \cong \big (K_0^{\gr}(\LL(F)),[\LL(F)]\big ).
\end{equation*}
It was shown in~\cite{hazann} that the conjecture is valid for the so called polycephaly graphs, which include, acyclic, comets and multi-headed graphs.

\subsection{}\label{volkhtes5}

Let $A$ be a strongly $\Gamma$-graded ring.  By Dade's Theorem(see~\cite[Theorem~2.8]{dade} and ~\cite[Thm.~3.1.1]{grrings}), the functor $(-)_0:\Gr A\rightarrow \Modd A_0$, $M\mapsto M_0$, is an additive functor with an  inverse $-\otimes_{A_0} A: \Modd A_0 \rightarrow \Gr A$ so that it induces an equivalence of categories.  
 This implies
that $K_0^{\gr}(A)  \cong K_0(A_0)$, where $A_0$ is the ring of homogeneous elements of degree zero. Furthermore, under this isomorphism, $K^{\gr}_0(A)_+$ maps onto $ K_0(A_0)_+$ and $[A]$ to $[A_0]$. 

\subsection{}\label{volkhtes}

For the Leavitt path algebra $\LL(E)$, the structure of the ring of homogeneous elements of degree zero, $\LL(E)_0$, is  known and can be represented by a stationary Bratteli diagram. Ordering the vertices of the finite graph $E$, say, $\{u_1,u_2,\dots,u_n\}$, then there are $A_E(i,j)$-lines connecting $u_i$ from the one row of the Bratteli diagram to the $u_j$ of the next row. Here $A_E$ is the adjacency matrix of $E$. Since we need to calculate $K_0(\LL(E)_0)$, we recall the description of $\LL(E)_0$ in the setting of finite graphs with no sinks (see the proof of
Theorem~5.3 in~\cite{amp} which is inspired by~\cite[Proposition~2.3]{cuntzkrieger} in the setting of graph $C^*$-algebras).
Let $L_{0,n}$ be the
linear span of all elements of the form $pq^*$ with $r(p)=r(q)$ and
$|p|=|q|\leq n$. Then 
\begin{equation}\label{ppooii}
\LL(E)_0=\bigcup_{n=0}^{\infty}L_{0,n},
\end{equation} where
the transition inclusion $L_{0,n}\rightarrow L_{0,n+1}$ is
to
identify $pq^*$ with $r(p)=v$ by
\[\sum_{\{ \alpha | s(\alpha)=v\}} p\alpha (q\alpha)^*.\] Note that since $E$ does not have sinks, 
for any $v\in E_0$ the set $\{ \alpha | s(\alpha)=v\}$ is not
empty.

For a fixed $v \in E^0$, let
$L_{0,n}^v$ be the linear span of all elements of
the form $pq^*$ with $|p|=|q|=n$ and $r(p)=r(q)=v$. Arrange the paths
of length $n$ with the range $v$ in a fixed order
$p_1^v,p_2^v,\dots,p_{k^v_n}^v$, and observe that the correspondence
of  $p_i^v{p_j^v}^*$ to the matrix unit $e_{ij}$ gives rise to a ring
isomorphism $L_{0,n}^v\cong\M_{k^v_n}(K)$. Furthermore, $L_{0,n}^v$, $v\in E^0$ form a direct sum.  This implies that
\[L_{0,n}\cong
\bigoplus_{v\in E^0}\M_{k_n^v}(K),\] where $k_n^v$, $v \in E^0$, 
 is the number of paths of length $n$ with the range
$v$. The inclusion map $L_{0,n}\rightarrow L_{0,n+1}$  is
\begin{equation}\label{volleyb}
A_E^t: \bigoplus_{v\in E^0} \M_{k^v_n}(K) \longrightarrow \bigoplus_{v\in E^0}
\M_{k^v_{n+1}}(K).
\end{equation}
  This
means $(A_1,\dots,A_l)\in \bigoplus_{v\in E^0} \M_{k^v_n}(K) $ is sent to
\[(\sum_{j=1}^l n_{j1}A_j,\dots,\sum_{j=1}^l n_{jl}A_j) \in
\bigoplus_{v\in E^0} \M_{k^v_{n+1}}(K),\] where $n_{ji}$ is the number of
edges connecting $v_j$ to $v_i$ and
\[\sum_{j=1}^lk_jA_j=\left(
\begin{matrix}
A_1 &       &             & \\
       & A_1 &            & \\
       &         & \ddots &\\
       &         &            & A_l\\
\end{matrix}
\right)
\]
in which each matrix is repeated $k_j$ times down the leading
diagonal and if $k_j=0$, then $A_j$ is omitted.

Writing $\LL(E)_0=\varinjlim_{n} L_{0,n}$, since the Grothendieck group $K_0$ respects the
direct limit, we have $K_0(\LL(E)_0)\cong
\varinjlim_{n}K_0(L_{0,n})$. Since $K_0$ of  (Artinian) simple
algebras are $\mathbb Z$, the ring homomorphism $L_{0,n}\rightarrow
L_{0,n+1}$ induces the group homomorphism \[\mathbb Z^{E^0} \stackrel{A_E^t}{\longrightarrow}
\mathbb Z^{E^0},\]
where $A_E^t:\mathbb Z^{E^0} \rightarrow \mathbb Z^{E^0}$ is multiplication  from left which is induced by the homomorphism~(\ref{volleyb}). 

For a finite graph $E$ with no sinks, with $n$ vertices and the adjacency matrix $A$, by Theorem~\ref{sthfin}, $K^{\gr}_0(\LL(E))\cong K_0(\LL(E)_0)$. Thus  $K^{\gr}_0(\LL(E))$ is the direct limit  of the ordered direct system 
\begin{equation}\label{thu3}
\mathbb Z^n \stackrel{A^t}{\longrightarrow} \mathbb Z^n \stackrel{A^t}{\longrightarrow}  \mathbb Z^n \stackrel{A^t}{\longrightarrow} \cdots,
\end{equation}
where the ordering in $\mathbb Z^n$ is defined point-wise.

\subsection{} \label{peeme}

Since for a finite graph with no sinks, the graded Grothendieck group of its associated Leavitt path algebra is the direct limit of the form~(\ref{thu3}), here we recall two different presentations for the direct limit of abelian groups. This will be used in Example~\ref{gfrt} and Lemma~\ref{mmpags}. 

In~\S\ref{volkhtes} it was shown that for a finite graph with no sinks  its graded Grothendieck group is a direct limit of a direct system of ordered free abelian groups with a matrix $A$ (the transpose of the adjacency matrix of the graph) acting as an order preserving group homomorphism (from the left) as follows
\begin{equation}\label{thuluncht}
\mathbb Z^n \stackrel{A}{\longrightarrow} \mathbb Z^n \stackrel{A}{\longrightarrow}  \mathbb Z^n \stackrel{A}{\longrightarrow} \cdots,
\end{equation}
where the ordering in $\mathbb Z^n$ is defined point-wise. The direct limit of this system, $\varinjlim_{A} \mathbb Z^n$, is an ordered group and  can be described as follows. Consider the pair $(a,k)$, where $a\in \mathbb Z^n$ and $k\in \mathbb N$, and define the equivalence relation $(a,k)\sim (b,k')$ if $A^{k''-k}a=A^{k''-k'}b$ for some $k'' \in \mathbb N$.  Let $[a,k]$ denote the equivalence class of $(a,k)$. Clearly $[A^na,n+k]=[a,k]$. Then it is not difficult to show that the direct limit of the system \ref{thuluncht} is the abelian group consists of equivalent classes $[a,k]$, $a\in \mathbb Z^n$, $k \in \mathbb N$, with addition defined by
\[[a,k]+[b,k']=[A^{k'}a+A^kb,k+k'].\]
The positive cone of this ordered group is the set of elements $[a,k]$, where $a\in {\mathbb Z^+}^n$, $k\in \mathbb N$. 
Furthermore, there is automorphism $\delta_A: \varinjlim_{A} \mathbb Z^n \rightarrow \varinjlim_{A} \mathbb Z^n$ defined by $\delta_A([a,k])=[Aa,k]$. 

There is another presentation for  $\varinjlim_{A} \mathbb Z^n$ which is sometimes easier to work with. Consider the set 
\begin{equation}\label{q11}
\Delta_A=\big \{v\in  A^n \mathbb  Q^n \mid A^k v \in \mathbb Z^n, \text { for some } k \in \mathbb N\big \}.
\end{equation}
 The set $\Delta_A$ forms an ordered abelian group with the usual addition of vectors and  the positive cone  
\begin{equation}\label{q22}
\Delta_A^+=\big \{v\in  A^n \mathbb Q^n \mid A^k v\in {\mathbb Z^+}^n, \text { for some } k \in \mathbb N\big \}.
\end{equation}
Furthermore, there is automorphism $\delta_A:\Delta_A\rightarrow \Delta_A$ defined by $\delta_A(v)=A v$. The map 
\begin{align}\label{kkjjhhga}
\phi:\Delta_A&\rightarrow \varinjlim_{A} \mathbb Z^n\\
v &\mapsto [A^kv,k], \notag
\end{align} 
where $k\in \mathbb N$ such that $A^k v \in \mathbb Z^n$, is an isomorphism which respects the action of $A$ and the ordering, i.e., $\phi(\Delta_A^+)= (\varinjlim_{A} \mathbb Z^n)^+$ and $\phi(\delta_A(v))=\delta_A\phi(v)$.

\begin{example} \label{gfrt}
For the graph 
\begin{equation*}
E:\,\,\,\,\,\,\,\,\,\,\,\,{\def\labelstyle{\displaystyle}
\xymatrix{
  \bullet \ar@(lu,ld) \ar@/^0.9pc/[r] \ar@/^1.4pc/[r]  & \bullet \ar@/^0.9pc/[l]   &   
}}
\end{equation*}
with the adjacency 
$A_E=\left(
\begin{array}{cc}
 1 & 2 \\
 1 & 0
\end{array}
\right),$
the Bratteli diagram associated to $\LL(E)_0$ is 
\begin{equation*}
{\def\labelstyle{\displaystyle}
\xymatrix@=15pt{
 \bullet \ar@{-}[r] \ar@{=}[dr] &  \bullet \ar@{-}[r] \ar@{=}[dr] & \bullet  \ar@{-}[r] \ar@{=}[dr] &\\
\bullet   \ar@{-}[ur] & \bullet \ar@{-}[ur]  & \bullet  \ar@{-}[ur]&
}} 
\end{equation*}
and $\LL(E)_0$ is the direct limit of the system 
\begin{align*} 
K \oplus K
 \stackrel{A_{E}^t}{\longrightarrow}  \M_2(K) &\oplus \M_2(K)
\stackrel{A_{E}^t}\longrightarrow  \M_4(K)\oplus \M_4(K)
\stackrel{A_{E}^t}\longrightarrow \cdots \\
(a,b)\mapsto \left(
\begin{array}{cc}
 a & 0 \\
 0 & b
\end{array}
\right)
&\oplus
\left(
\begin{array}{cc}
 a & 0 \\
 0 & a
\end{array}
\right)
\end{align*} 
So $K^{\gr}_0(\LL(E))$ is the direct limit of the direct system 
\begin{equation*}
\mathbb Z^2 \stackrel{A_E^t}{\longrightarrow} \mathbb Z^2 \stackrel{A_E^t}{\longrightarrow}  \mathbb Z^2 \stackrel{A_E^t}{\longrightarrow} \cdots,
\end{equation*}
Since $\det(A^t_E)=-2$, one can easily calculate that \[K^{\gr}_0(\LL(E))\cong \mathbb Z[1/2]\bigoplus  \mathbb Z[1/2].\] Furthermore 
$[\LL(E)] \in K^{\gr}_0(\LL(E))$ is represented by $(1,1)\in  \mathbb Z[1/2]\bigoplus  \mathbb Z[1/2]$. Adopting~(\ref{q11}) for the description of $K^{\gr}_0(\LL(E))$, since the action of $x$ on $K^{\gr}_0(\LL(E))$ represented by action of $A_E^t$ from the left, we have $x (a,b)=(a+b,2a)$. Furthermore, considering 
~(\ref{q22}) for the positive cone,  ${A_E^t}^k(a,b)$ is eventually positive, if $v (a,b) >0$, where $v=(2,1)$ is the Perron eigenvector of $A_E$ (see~\cite[Lemma~7.3.8]{lindmarcus}). It follows that 
\[K^{\gr}_0(\LL(E))^+=\Delta_{A_E^t}^+=\big \{(a,b) \in \mathbb Z[1/2]\oplus  \mathbb Z[1/2] \mid 2a+b > 0\big \} \cup \{(0,0)\}.\]
\end{example}

\section{Krieger's dimension groups and Wagoner's dimension modules} \label{jjhh1}

The Grothendieck group $K_0$ is a pre-ordered abelian group with the set of isomorphism classes of finitely generated projective modules as its positive cone.  For the category of ultramatricial algebras, $K_0$ along with its positive cone and the position of the identity is a complete invariant (see ~\cite{elliot} and~\cite[\S15]{goodearlbook}). Motivated by this, Krieger in~\cite{krieger1} defined an invariant for the irreducible shift of finite types. In general, a nonnegative integral $n\times n$ matrix $A$ gives rise to a stationary system. This in turn gives a direct system of ordered free abelian groups with $A$ acting as an order preserving group homomorphism as follows
\[\mathbb Z^n \stackrel{A}{\longrightarrow} \mathbb Z^n \stackrel{A}{\longrightarrow}  \mathbb Z^n \stackrel{A}{\longrightarrow} \cdots,
\]
where the ordering in $\mathbb Z^n$ is defined point-wise.  The direct limit of this system, $\Delta_A:= \varinjlim_{A} \mathbb Z^n$, (i.e, the $K_0$ of the stationary system,) along with its positive cone, $\Delta^+$, and the automorphism which induced by $A$ on the direct limit, 
$\delta_A:\Delta_A \rightarrow \Delta_A$, is the invariant considered by Krieger, now known as Krieger's dimension group. Following~\cite{lindmarcus}, we denote this triple by $(\Delta_A, \Delta_A^+, \delta_A)$.  It can be showen that two matrices $A$ and $B$ are shift equivalent if and only if their associated Krieger's dimension groups are isomorphic (\cite[Theorem~4.2]{krieger1}, and~\cite[Theorem~7.5.8]{lindmarcus}, see also~\cite[\S7.5]{lindmarcus} for a detailed algebraic treatment). Wagoner noted that the induced structure on $\Delta_A$ by the automorphism $\delta_A$ makes $\Delta_A$ a $\mathbb Z[x,x^{-1}]$-module which was systematically used in~\cite{wago1,wago2} (see also~\cite[\S3]{boyle}).

The graded Grothendieck group of a $\mathbb Z$-graded ring has a natural $\mathbb Z[x,x^{-1}]$-module structure (see~\S\ref{gftegtds}) and 
the following observation (Lemma~\ref{mmpags}) that the graded Grothendieck group of the Leavitt path algebra associated to a matrix $A$ coincides with the Krieger dimension group of the shift of finite type associated to $A^t$, i.e., the graded dimension group of a Leavitt path algebra coincides with Krieger's dimension group,
\[\big(K_0^{\gr}(\LL(E)),(K_0^{\gr}(\LL(E))^+\big ) \cong (\Delta_{A^t}, \Delta_{A^t}^+)\] 
will provide a link between the theory of Leavitt path algebras and symbolic dynamics. This lemma was also proved differently by Ara and Pardo in~\cite{arapardo}.

\begin{lemma}\label{mmpags}
Let $E$ be a finite graph with no sinks with the  adjacency matrix $A$. Then there is an isomorphism 
$\phi:K^{\gr}_0(\LL(E)) \longrightarrow \Delta_{A^t}$ such that $\phi(x \alpha)=\delta_{A^t}\phi(\alpha)$, $\alpha\in \LL(E)$, $x\in \mathbb Z[x,x^{-1}]$ and $\phi(K^{\gr}_0(\LL(E))^+)=\Delta_{A^t}^+$. 
\end{lemma}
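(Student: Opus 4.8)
The plan is to exhibit the isomorphism $\phi$ by tracking through the explicit presentations of both sides already assembled in the preceding sections, so that the hard analytic/dynamical content is entirely imported from what precedes Lemma~\ref{mmpags}. On the Leavitt path algebra side, \S\ref{volkhtes} establishes that, since $E$ is a finite graph with no sinks, $\LL(E)$ is strongly $\mathbb Z$-graded (Theorem~\ref{sthfin}), so Dade's theorem (\S\ref{volkhtes5}) gives $K^{\gr}_0(\LL(E)) \cong K_0(\LL(E)_0)$, carrying the positive cone to the positive cone and $[\LL(E)]$ to $[\LL(E)_0]$. Then the Bratteli-diagram analysis of $\LL(E)_0 = \varinjlim_n L_{0,n}$ with $L_{0,n} \cong \bigoplus_{v\in E^0}\M_{k^v_n}(K)$ and transition maps given by $A_E^t$ shows that $K_0(\LL(E)_0)$ is the direct limit of the ordered system $\mathbb Z^n \xrightarrow{A^t} \mathbb Z^n \xrightarrow{A^t} \cdots$ displayed in~(\ref{thu3}), again as ordered groups. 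So the first step is simply to record that $K^{\gr}_0(\LL(E)) \cong \varinjlim_{A^t}\mathbb Z^n = \Delta_{A^t}$ as ordered abelian groups, with positive cone mapping onto $\Delta_{A^t}^+$; this is essentially a concatenation of the isomorphisms already set up, together with the two equivalent presentations of a direct limit recalled in \S\ref{peeme}.

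The second step is to check that this ordered-group isomorphism intertwines the $\mathbb Z[x,x^{-1}]$-actions, i.e.\ that $\phi(x\alpha) = \delta_{A^t}\phi(\alpha)$. For this I would use equation~(\ref{hterw}): for $i\geq 0$, $x[u\mathcal A(i)] = [u\mathcal A(i+1)] = \sum_{\{\alpha \in E^1 \mid s(\alpha)=u\}}[r(\alpha)\mathcal A(i)]$, which is precisely left multiplication by $A^t$ on the standard basis of $\mathbb Z^n$ at level $i$ of the direct system. Since the action of $x$ on $K^{\gr}_0(\LL(E))$ is defined on generators $[u\mathcal A(i)]$ by shifting $i \mapsto i+1$, and $\delta_{A^t}$ on $\Delta_{A^t}$ is (in the presentation~(\ref{q11})) just multiplication by $A^t$, the two maps agree on a generating set of the monoid and hence on all of $K^{\gr}_0(\LL(E))$ by $\mathbb Z$-linearity. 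Concretely, using the presentation $\varinjlim_{A^t}\mathbb Z^n$ with classes $[a,k]$, one has $\delta_{A^t}([a,k]) = [A^t a, k] = [a, k-1]$ for $k \geq 1$, matching the degree-shift description of the $x$-action under the identification of $[u\mathcal A(i)]$ with the class of the $u$-th basis vector at level $i$.

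I do not expect a serious obstacle here; the statement is really a bookkeeping lemma that collects the structural results of \S\ref{volkhtes5}--\S\ref{peeme} into the single clean statement needed for the symbolic-dynamics dictionary. The one point requiring a little care is the compatibility of the \emph{direction} of the maps: the Bratteli diagram transitions and the induced $K_0$-maps are given by $A_E^t$ (the \emph{transpose} of the adjacency matrix), so the Krieger dimension group appearing is $\Delta_{A^t}$ rather than $\Delta_A$ — this is why the statement is phrased with $A^t$, and it must be threaded consistently through steps one and two. A second minor subtlety is that formula~(\ref{hterw}) is only verified for $i\geq 0$, whereas the generators of $\mathcal V^{\gr}(\mathcal A)$ range over all $i\in\mathbb Z$; but since $x$ acts invertibly on $K^{\gr}_0$ and the classes $[u\mathcal A(i)]$ for $i\geq 0$ already generate (every negative shift is $x^{-m}$ of a nonnegative one), the relation on nonnegative shifts suffices to pin down the action, and hence the intertwining $\phi(x\alpha) = \delta_{A^t}\phi(\alpha)$ holds on all of $K^{\gr}_0(\LL(E))$.
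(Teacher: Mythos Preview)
Your proposal is correct and follows essentially the same route as the paper: both invoke strong grading and Dade's theorem to reduce to the ordered direct limit~(\ref{thu3}), then verify the intertwining $\phi(x\,\cdot) = \delta_{A^t}\phi(\cdot)$ on the generators $[u\mathcal A(i)]$ via formula~(\ref{hterw}). Your discussion of the $i\geq 0$ versus $i\in\mathbb Z$ issue is in fact slightly more careful than the paper's, which checks only $[u\mathcal A]$ and leaves the extension to all shifts implicit.
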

\begin{proof}
Since by Theorem~\ref{sthfin}, $\LL(E)$ strongly graded, there is an ordered isomorphism $K^{\gr}_0(\LL(E)) \rightarrow K_0(\LL(E)_0)$. 
Thus by~\S\ref{volkhtes}, (see~(\ref{thu3})) the ordered group $K^{\gr}_0(\LL(E))$ coincides with the ordered group $\Delta_{A^t}$. We only need to check that their module structures are compatible. It is enough to  show that the action of $x$ on $K^{\gr}_0$ coincides with the action of $A^t$ on $K_0(\LL(E)_0)$, i.e., $\phi(x \alpha)=\delta_{A^t} \phi(\alpha)$. 

Set $\mathcal A=\LL(E)$. Since graded finitely generated projective modules are generated by $u\mathcal A(i)$, where $u\in E^0$ and $i \in \mathbb Z$, it suffices to show that $\phi(x [u\mathcal  A])=\delta_{A^t}\phi([u \mathcal A])$. 
Since the image of $u\mathcal A$ in $K_0(\mathcal A_0)$ is $[u\mathcal A_0]$, and $\mathcal A_0= \bigcup_{n=0}^{\infty}L_{0,n},$ (see~(\ref{ppooii})) using the presentation of $K_0$ given in~\S\ref{peeme}, we have 
\[\phi([u\mathcal  A])=[u\mathcal  A_0]=[uL_{0,0},1]=[u,1].\] Thus 
\[\delta_{A^t}\phi([u\mathcal  A])=\delta_{A^t}([u,1])=[A^t u,1]=\sum_{\{\alpha \in E^1 \mid s(\alpha)=u\}}[r(\alpha),1].\]

On the other hand, 
\begin{multline}
\phi(x[u\mathcal  A])=\phi([u\mathcal  A(1)])=\phi\big(\sum_{\{\alpha \in E^1 \mid s(\alpha)=u\}}[r(\alpha)\mathcal  A]\big)=\\ 
\sum_{\{\alpha \in E^1 \mid s(\alpha)=u\}}[r(\alpha)\mathcal  A_0]=
\sum_{\{\alpha \in E^1 \mid s(\alpha)=u\}}[r(\alpha)L_{0,0},1]=\sum_{\{\alpha \in E^1 \mid s(\alpha)=u\}}[r(\alpha),1].
\end{multline}
Thus $\phi(x [u\mathcal  A])=\delta_{A^t}\phi([u \mathcal A])$. This finishes the proof. 
 \end{proof}

It is easy to see that two matrices $A$ and $B$ are shift equivalent if and only if $A^t$ and $B^t$ are shift equivalent. Combining this with Lemma~\ref{mmpags} and the fact that Krieger's dimension group is a complete invariant for shift equivalent we have the following corollary.

\begin{corollary}\label{h99}
Let $E$ and $F$ be finite graphs with no sinks and $A_E$ and $A_F$ be their adjacency matrices, respectively. Then
$A_E$ is shift equivalent to $A_F$ if and only if there is an order preserving $\mathbb Z[x,x^{-1}]$-module  isomorphism
$K_0^{\gr}(\LL(E)) \cong_{\gr} K_0^{\gr}(\LL(F))$.
\end{corollary}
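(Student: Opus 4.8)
The plan is to assemble this corollary from the machinery already in place, essentially as a bookkeeping exercise that translates the symbolic dynamics statement into the Leavitt path algebra statement via Lemma~\ref{mmpags}. First I would recall the classical input: by Krieger's theorem (\cite[Theorem~4.2]{krieger1}, \cite[Theorem~7.5.8]{lindmarcus}), two nonnegative integral matrices $B$ and $C$ are shift equivalent if and only if their Krieger dimension groups $(\Delta_B,\Delta_B^+,\delta_B)$ and $(\Delta_C,\Delta_C^+,\delta_C)$ are isomorphic, meaning there is an order-preserving group isomorphism $\Delta_B \to \Delta_C$ commuting with the shift automorphisms $\delta_B$, $\delta_C$ — equivalently, an order-preserving $\mathbb Z[x,x^{-1}]$-module isomorphism, since the $\mathbb Z[x,x^{-1}]$-module structure on $\Delta_B$ is precisely the one induced by $\delta_B$ (Wagoner's observation, recalled in~\S\ref{jjhh1}).

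Next I would use Lemma~\ref{mmpags}: for a finite graph $E$ with no sinks and adjacency matrix $A_E$, there is an isomorphism $\phi_E: K^{\gr}_0(\LL(E)) \to \Delta_{A_E^t}$ that carries the positive cone $K^{\gr}_0(\LL(E))^+$ onto $\Delta_{A_E^t}^+$ and intertwines the action of $x$ with $\delta_{A_E^t}$. In other words, $\phi_E$ is an isomorphism of ordered $\mathbb Z[x,x^{-1}]$-modules $K^{\gr}_0(\LL(E)) \cong_{\gr} \Delta_{A_E^t}$, and likewise $\phi_F: K^{\gr}_0(\LL(F)) \cong_{\gr} \Delta_{A_F^t}$. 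Then the chain of equivalences runs: $K_0^{\gr}(\LL(E)) \cong_{\gr} K_0^{\gr}(\LL(F))$ as ordered $\mathbb Z[x,x^{-1}]$-modules $\iff$ $\Delta_{A_E^t} \cong \Delta_{A_F^t}$ as ordered $\mathbb Z[x,x^{-1}]$-modules (conjugating by $\phi_E$, $\phi_F$) $\iff$ $A_E^t$ is shift equivalent to $A_F^t$ (Krieger) $\iff$ $A_E$ is shift equivalent to $A_F$.

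The last step uses the elementary fact — stated just before the corollary in the excerpt — that $A$ and $B$ are shift equivalent if and only if $A^t$ and $B^t$ are: this follows directly from the definition of shift equivalence, since transposing the lag-$\ell$ equations $AR = RB$, $SA = BS$, $A^\ell = RS$, $B^\ell = SR$ turns them into the lag-$\ell$ shift-equivalence data $(R^t, S^t)$ between $B^t$ and $A^t$. There is really no serious obstacle here; the only thing requiring a modicum of care is making sure the three notions of ``isomorphism'' line up exactly — an order-preserving $\mathbb Z[x,x^{-1}]$-module isomorphism in the sense of~\S\ref{conjisi} is the same data as a Krieger dimension group isomorphism in the sense of~\S\ref{jjhh1} — and this is immediate once one observes that ``$\mathbb Z[x,x^{-1}]$-linear'' unwinds to ``commutes with the shift automorphism.'' I would therefore present the proof as a short two- or three-line argument citing Lemma~\ref{mmpags}, the transpose remark, and Krieger's theorem in sequence.
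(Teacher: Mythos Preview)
Your proposal is correct and matches the paper's own argument essentially line for line: the paper derives the corollary in a single sentence immediately preceding the statement, combining Lemma~\ref{mmpags}, the transpose observation, and Krieger's theorem that the dimension group is a complete invariant for shift equivalence.
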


\section{Product of graphs and graded Grothendieck groups}\label{oo12}

In~\cite[p.~149]{wago2}, Wagoner considered the product of two shift spaces and showed that the dimension module of the product is isomorphic to the tensor product of the dimension module of the shift spaces. In this section we carry over this to the case of Leavitt path algebras and graded Grothendieck groups. 

Let $A$ and $B$ be the adjacency matrices of the graphs $E$ and $F$, respectively, where $|E^0|=m$ and $|F^0|=n$.
Note that $A$ and $B$ can be considered as endomorphisms in $\End_{\mathbb Z}(\mathbb Z^m)$ and $\End_{\mathbb Z}(\mathbb Z^n)$, respectively.  We define the {\it product of the graphs} $E$ and $F$, denoted by $E\otimes F$, to be the graph associated to the matrix $A\otimes B \in \End_{\mathbb Z}(\mathbb Z^{m+n})$.  Concretely, if $A=(a_{ij})_{1\leq i,j\leq m}$ and $B=(b_{lk})_{1\leq l,k\leq n}$, then 
\begin{equation}\label{poiuyt}
A\otimes B=\Big ( a_{ij}(b_{lk})_{1\leq l,k\leq n}\Big)_{1\leq i,j\leq m},
\end{equation}
 i.e., the $ij$th entry of $A\otimes B$ is the matrix block 
$(a_{ij}b_{lk})_{1\leq l,k\leq n}$. Note that this representation is independent of considering $A$ and $B$ as matrices acting from left or right.

If the matrices $A$ and $B$ have no zero rows, i.e., their associated Leavitt path algebras are strongly graded (Theorem~\ref{sthfin}), then so is the Leavitt path algebra associated to $A\otimes B$. 

\begin{example}\label{travelin}
Consider the graphs 
\begin{equation*}
{\def\labelstyle{\displaystyle}
E : \quad \,\, \xymatrix{
 \bullet  \ar@(lu,ld)\ar@/^0.9pc/[r] & \bullet \ar@/^0.9pc/[l] 
}} \qquad  \quad
{\def\labelstyle{\displaystyle}
F: \quad \,\, \xymatrix{
  \bullet \ar@/^0.9pc/[r] & \bullet \ar@/^0.9pc/[l]   \ar@/^0.9pc/[r] & \bullet \ar@/^0.9pc/[l] 
}} 
\end{equation*}
with their adjacency matrices
\begin{equation*}
A=\left(
\begin{array}{cc}
 1 & 1 \\
 1 & 0
\end{array}
\right)
\qquad \text{ and }  \qquad 
B=\left(
\begin{array}{ccc}
 0 & 1 & 0 \\
 1 & 0 & 1 \\
 0 & 1 & 0
\end{array}
\right). 
\end{equation*}
Then
\begin{equation*}
A\otimes B=\left(
\begin{array}{cccccc}
 0 & 1 & 0 & 0 & 1 & 0 \\
 1 & 0 & 1 & 1 & 0 & 1 \\
 0 & 1 & 0 & 0 & 1 & 0 \\
 0 & 1 & 0 & 0 & 0 & 0 \\
 1 & 0 & 1 & 0 & 0 & 0 \\
 0 & 1 & 0 & 0 & 0 & 0
\end{array}
\right)
\end{equation*}\label{pgtisu}
and the graph associated to this matrix is  
\begin{equation*}
{\def\labelstyle{\displaystyle}
E\otimes F: \xymatrix{
& \bullet \ar[dl]  \ar@/^0.8pc/[dr]\\
  \bullet \ar@/^0.8pc/[ur] \ar@/^0.4pc/[r] & \bullet \ar@/^0.4pc/[l]  \ar[dr] \ar@/^0.8pc/[dl] \ar@/^0.4pc/[r] & \bullet \ar@/^0.4pc/[l]  \ar[ul]\\
  \bullet \ar[ur] &&  \bullet \ar@/^0.8pc/[lu]
}} 
\end{equation*}

\end{example}

In the following theorem we consider the tensor product of two pre-ordered $\mathbb Z[x,x^{-1}]$-modules $G_1$ and $G_2$, over $\mathbb Z$. We define the action of $x$ on $G_1\otimes_{\mathbb Z} G_2$ diagonally, i.e., $x (g_1\otimes g_2)=xg_1 \otimes xg_2$ and extend it to the whole $\mathbb Z[x,x^{-1}]$ naturally. This makes $G_1\otimes_{\mathbb Z} G_2$ a  $\mathbb Z[x,x^{-1}]$-module. Further, the monoid $G_1^{+} \otimes G_2^{+}$ (i.e., the set of direct sums of images of $G_1^{+}$ and $G_2^{+}$ in $G_1\otimes_{\mathbb Z} G_2$ ), makes $G_1\otimes_{\mathbb Z} G_2$ an pre-order group respecting the module structure.

\begin{theorem}\label{ppsch}
Let $\LL(E)$ and $\LL(F)$ be Leavitt path algebras associated to finite graphs with no sinks $E$ and $F$. 
Then there is an order preserving $\mathbb Z[x,x^{-1}]$-module isomorphism 
\[K^{\gr}_0(\LL(E\otimes F)) \cong K^{\gr}_0(\LL(E))\otimes K^{\gr}_0(\LL(F)),\]
which sends  $[\LL(E\otimes F)]$ to $ [\LL(E)]\otimes [\LL(F)]$. 
\end{theorem}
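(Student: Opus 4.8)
The plan is to reduce everything to the explicit description of the graded Grothendieck group as a direct limit of free abelian groups given in \S\ref{volkhtes} and \S\ref{peeme}, and then to exhibit the claimed isomorphism at the level of these direct limits. By Lemma~\ref{mmpags} (applied to $E$, to $F$, and to $E\otimes F$), we have ordered $\mathbb Z[x,x^{-1}]$-module isomorphisms $K^{\gr}_0(\LL(E))\cong \Delta_{A^t}$, $K^{\gr}_0(\LL(F))\cong \Delta_{B^t}$, and $K^{\gr}_0(\LL(E\otimes F))\cong \Delta_{(A\otimes B)^t}$, where the action of $x$ corresponds to $\delta_{A^t}$, $\delta_{B^t}$, $\delta_{(A\otimes B)^t}$ respectively; here I use that the adjacency matrix of $E\otimes F$ is $A\otimes B$ and that $(A\otimes B)^t=A^t\otimes B^t$. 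So it suffices to prove that there is an order preserving isomorphism
\[
\Delta_{A^t\otimes B^t}\;\cong\;\Delta_{A^t}\otimes_{\mathbb Z}\Delta_{B^t}
\]
that intertwines $\delta_{A^t\otimes B^t}$ with $\delta_{A^t}\otimes\delta_{B^t}$ and carries the distinguished order-unit to the tensor of the order-units. This is exactly Wagoner's computation for dimension modules in~\cite[p.~149]{wago2}, and the task is to carry it through in the present notation.

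The key steps, in order, are as follows. First I would use the presentation $\varinjlim_{A^t}\mathbb Z^m$ (equivalence classes $[a,k]$ with $[a,k]+[b,k']=[(A^t)^{k'}a+(A^t)^k b,\,k+k']$), and similarly $\varinjlim_{B^t}\mathbb Z^n$, and observe that $\mathbb Z^m\otimes_{\mathbb Z}\mathbb Z^n\cong\mathbb Z^{mn}$ with $A^t\otimes B^t$ acting as the matrix~(\ref{poiuyt}); since tensor product over $\mathbb Z$ commutes with direct limits, $\bigl(\varinjlim_{A^t}\mathbb Z^m\bigr)\otimes_{\mathbb Z}\bigl(\varinjlim_{B^t}\mathbb Z^n\bigr)\cong\varinjlim(\mathbb Z^{mn},\,A^t\otimes B^t)=\varinjlim_{(A\otimes B)^t}\mathbb Z^{mn}$, with the isomorphism given on representatives by $[a,k]\otimes[b,k]\mapsto[a\otimes b,k]$ (one may always arrange a common index $k$ using $[(A^t)^\ell a,\ell+k]=[a,k]$). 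Second, this isomorphism is plainly a $\mathbb Z[x,x^{-1}]$-module map, because on representatives $\delta$ acts as $[a,k]\mapsto[A^t a,k]$ on each factor and as $[a\otimes b,k]\mapsto[(A^t\otimes B^t)(a\otimes b),k]=[(A^t a)\otimes(B^t b),k]$ on the target, i.e. the diagonal action matches by construction. Third, the order-unit: under the identification of \S\ref{volkhtes}, $[\LL(E)]$ corresponds to the class of $(1,\dots,1)=\sum_u[u,1]$, and similarly for $F$ and $E\otimes F$; since $(1,\dots,1)_{mn}=(1,\dots,1)_m\otimes(1,\dots,1)_n$, the order-unit of the product goes to the tensor of the order-units. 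Fourth, the order: using the $\Delta_A$-presentation~(\ref{q11})--(\ref{q22}), the positive cone of $\Delta_{A^t}$ consists of vectors eventually landing in ${\mathbb Z^+}^m$, and one checks, via the Perron eigenvector of $A\otimes B$ being $v_A\otimes v_B$ (where $v_A,v_B$ are the Perron eigenvectors of $A,B$; cf.~\cite[Lemma~7.3.8]{lindmarcus}), that the image of $\Delta_{A^t}^+\otimes\Delta_{B^t}^+$ generates $\Delta_{(A\otimes B)^t}^+$ — here $A\otimes B$ may fail to be irreducible even when $A,B$ are, so I would instead argue directly: a vector $w\in\mathbb Q^{mn}$ has $(A^t\otimes B^t)^k w\in{\mathbb Z^+}^{mn}$ for large $k$ iff $w$ lies in the $\mathbb Z_{\geq 0}$-span of $a\otimes b$ with $a\in\Delta_{A^t}^+$, $b\in\Delta_{B^t}^+$ modulo the equivalence, which is the definition of the cone $G_1^+\otimes G_2^+$ used in the statement.

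The main obstacle I expect is the order-preservation in both directions, i.e. showing not just that the image of $\Delta_{A^t}^+\otimes\Delta_{B^t}^+$ lands in $\Delta_{(A\otimes B)^t}^+$ (which is immediate, since $A^t\otimes B^t$ has nonnegative entries so it maps nonnegative tensors to nonnegative vectors) but that \emph{every} element of the positive cone of the product arises this way — equivalently that the isomorphism is an order \emph{isomorphism}, not merely order preserving. This is where one genuinely needs the structure of the stationary systems: one has to show that a class $[c,k]\in\varinjlim_{(A\otimes B)^t}\mathbb Z^{mn}$ with $(A^t\otimes B^t)^j c\in{\mathbb Z^+}^{mn}$ for some $j$ can be rewritten, after pushing forward by powers of $A^t\otimes B^t$, as a sum of elementary tensors $a_i\otimes b_i$ with $a_i,b_i$ having nonnegative entries. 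I would handle this by replacing $c$ with $(A^t\otimes B^t)^j c$ (same class) to reduce to the case $c\in{\mathbb Z^+}^{mn}$, and then decomposing $c\in\mathbb Z^{mn}\cong\mathbb Z^m\otimes\mathbb Z^n$ into a nonnegative integer combination of standard basis tensors $e_p\otimes e_q$, each of which is manifestly in $\Delta_{A^t}^+\otimes\Delta_{B^t}^+$. The remaining verifications — that $\phi$ is well defined on equivalence classes, additive, and bijective — are the routine "tensor commutes with colimits" bookkeeping and I would only indicate them. Finally I would note that transporting this isomorphism through the three instances of Lemma~\ref{mmpags} yields the statement for $K^{\gr}_0$ of the Leavitt path algebras verbatim, including the claim about $[\LL(E\otimes F)]\mapsto[\LL(E)]\otimes[\LL(F)]$.
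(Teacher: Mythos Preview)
Your approach is essentially the same as the paper's: both reduce to the direct-limit presentation of~\S\ref{peeme} and exhibit an explicit isomorphism $\varinjlim_{A^t}\mathbb Z^m\otimes\varinjlim_{B^t}\mathbb Z^n\cong\varinjlim_{A^t\otimes B^t}\mathbb Z^{mn}$, check $\mathbb Z[x,x^{-1}]$-equivariance on generators, and match the order-units $[\bar 1,0]\otimes[\bar 1,0]\mapsto[\bar 1,0]$. The paper writes the forward map as $\phi([a,k]\otimes[b,l])=[A^l a\otimes B^k b,\,k+l]$ and writes down the inverse $\psi([c,k])=\sum_i[a_i,k]\otimes[b_i,k]$ via a splitting $c=\sum_i a_i\otimes b_i$; your ``common index'' formulation $[a,k]\otimes[b,k]\mapsto[a\otimes b,k]$ is the same map after using $[A^{l-k}a,l]=[a,k]$. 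One small remark: you are actually more explicit than the paper on the order-preservation step---the paper does not spell out why $\phi$ carries the cone onto the cone, whereas your argument (replace $c$ by $(A^t\otimes B^t)^jc\in{\mathbb Z^+}^{mn}$ and decompose into standard basis tensors $e_p\otimes e_q$) is exactly the right justification and is worth keeping.
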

\begin{proof}
Let $A_E$ and $A_F$ be the adjacency matrices of the graphs $E$ and $F$, respectively, where $|E^0|=m$ and $|F^0|=n$.
Set $A=A_E^t$ and $B=A_F^t$. Observe that $A\otimes B= (A_E \otimes A_F)^t$. Now by~(\ref{thu3}) the vertical maps of the following diagram are isomorphisms. 
\begin{equation*}
\xymatrix@=15pt{
K^{\gr}_0(\LL(E)) \otimes K^{\gr}_0(\LL(F)) \ar[d] \ar@{.>}[rr] && K^{\gr}_0(\LL(E\otimes F)) \ar[d] \\
 \varinjlim_{A} \mathbb Z^m \otimes  \varinjlim_{B} \mathbb Z^n \ar[rr]^{\phi} &&   \varinjlim_{A\otimes B} \mathbb Z^{m+n}}
\end{equation*}
Using the description of~\S\ref{peeme} for the direct limits, define $\phi$ on generators as follows: \[\phi([a,k]\otimes[b,l])=[A^{l}a\otimes B^kb,k+l].\]  One checks easily that this map is well-defined and is a homomorphism of groups. Further,
 \begin{multline*}
 x\phi([a,k]\otimes[b,l])=x[A^{l}a\otimes B^kb,k+l]= \\
 \big[(A\otimes B)(A^{l}a\otimes B^kb) ,k+l\big]=
[A^{l+1}a\otimes B^{k+1}b,k+l]=\\\phi( [Aa,k]\otimes[Bb,l])=\phi\big(x([a,k]\otimes[b,l])\big),\end{multline*}
shows that $\phi$ is a $\mathbb Z[x,x^{-1}]$-module homomorphism. Define 
\begin{align*}
\psi: \varinjlim_{A\otimes B} \mathbb Z^{m+n} &\longrightarrow \varinjlim_{A} \mathbb Z^m \otimes  \varinjlim_{B} \mathbb Z^n\\
[c,k]&\longmapsto \sum_i [a_i,k]\otimes[b_i,k],
\end{align*}
where $f(c)=\sum_i a_i\otimes b_i$ under a natural isomorphism $f:\mathbb Z^{m+n}\rightarrow \mathbb Z^m\otimes \mathbb Z^n$. One can check that $\psi$ is indeed well-defined and $\phi\psi$ and $\psi\phi$ are the identity map of the corresponding groups. 

Finally, $[\LL(E]\otimes [\LL(F)]$  is represented by $[\bar 1,0]\otimes [\bar 1,0]$ in $\varinjlim_{A} \mathbb Z^m \otimes  \varinjlim_{B}\mathbb Z^n$ and
$\phi([\bar1,0]\otimes [\bar1,0])=[\bar1,0]$ which represents  $[\LL(E\otimes F)]$ in $K^{\gr}_0(\LL(E\otimes F))$. 
\end{proof}

\begin{example}
For the graph $E$ in the Example~\ref{travelin}, one can calculate its dimension group as follows:
\begin{align*}
K^{\gr}_0(\LL(E)) & \cong\Z\oplus \Z; \\
K^{\gr}_0(\LL(E))^{+} & \cong\big \{(a,b) \mid \frac{1+\sqrt{5}}{2} a+ b\geq0 \big \}; \\
[\LL(E)]&=(1,1);\\
x(a,b)&=(a+b,a).
\end{align*} 
Furthermore for the graph $F$, we have 
\begin{align*}
K^{\gr}_0(\LL(F)) & \cong\Z[1/2]\oplus \Z[1/2]; \\
K^{\gr}_0(\LL(F))^{+} & \cong\N[1/2]\oplus \N[1/2]; \\
[\LL(F)]&=(2,1);\\
x(a,b)&=(2b,a).
\end{align*} 
This information along with Theorem~\ref{ppsch}, will easily determine the graded dimension group associated to the graph $E\otimes F$ in Example~\ref{pgtisu}. 

\end{example}

\section{Conjugacy, Shift equivalence and Graded Morita equivalence}\label{willi}

\subsection{}\label{mooiw}
The notion of the shift equivalence for matrices was introduced by Williams~\cite{williams} (see also~\cite[\S7]{lindmarcus}) in an attempt to provide a computable machinery for determining the conjugacy between two shift of finite types. Recall that two square nonnegative integer matrices $A$ and $B$ are called {\it elementary shift equivalent}, and denoted by $A\sim_{ES} B$, if there are nonnegative matrices $R$ and $S$ such that $A=RS$ and $B=SR$. 
The equivalence relation $\sim_S$  on square nonnegative integer matrices generated by elementary strong shift equivalence is called {\it strong shift equivalence}. The weaker notion of shift equivalent is defined as follows. The nonnegative integer matrices $A$ and $B$ are called {\it shift equivalent} if there are nonnegative matrices $R$ and $S$ such that $A^l=RS$ and $B^l=SR$, for some $l\in \mathbb N$, and 
$AR=RB$ and $SA=BS$.

\subsection{} \label{hdsweg}
We recall the type of the graphs which are of interest in symbolic dynamics. It turns out the Leavitt path algebras associated to this class of graphs are very interesting algebras (i.e, purely infinite simple algebras). 

Let $E$ be a finite directed graph. Then $E$ is {\it irreducible} if given any two vertices $v$ and $w$ in $E$, there is a path from 
$v$ to $w$. $E$ is called {\it essential} if there are neither sources nor sinks in $E$, and
$E$ is {\it trivial} if $E$ consists of a single cycle with no other vertices or edges (see ~\cite [Definition~2.2.13]{lindmarcus} and~\cite{franks}). 
A set of graphs which is  simultaneously irreducible, essential, and nontrivial is of great interest in the theory of shifts of finite type. 
Indeed, an edge that begins at a source or ends at a sink does not appear in any bi-infinite path so the only part of an arbitrary finite graph $E$ which appears in symbolic dynamic is the graph with no sources and sinks which obtained by repeatedly removing all the sources and  sinks from $E$ (see~\cite[The remark after Example~2.2.8 and Proposition~2.2.10]{lindmarcus}). The following result connects this class of graphs to a very interesting class of Leavitt path algebras. 
Let $E$ be a finite graph. Then $E$ is irreducible, nontrivial, and essential if and only if 
$E$ contains no sources, and $\LL(E)$ is purely infinite simple (see~\cite[Lemma~1.17]{flowa}).

Starting from a graph $E$, and a partition $\mathcal P$ of the edges, one can obtain new graphs, called {\it out-splitting}, denoted by $E_s(\mathcal P)$, and {\it in-splitting}, denoted by $E_r(\mathcal P)$. The converse of this processes are called {\it out-amalgamation} and {\it in-amalgamation} (see~\cite[\S2.4]{lindmarcus}, and \cite[Definition~1.9 and Definition~1.12]{flowa}). Furthermore, when the graph has a source, say $v$, there is a source elimination graph $E_{\backslash v}$, which is obtained by removing $v$ and all the edges emitting from $v$ from $E$ (see~\cite[Definition~1.2]{flowa}).

The following observation will be used in Proposition~\ref{hgysweet}. Let $E$ be a finite graph, let $v \in E^0$, and let $\mathcal P$ be a partition of the edges of $E$. Then $E$ is essential (resp. nontrivial, resp. irreducible) if and only if $E_s(\mathcal P)$, $E_r(\mathcal P)$, and $E_{\backslash v}$ are each essential (resp. nontrivial, resp. irreducible) (see~\cite[Lemma~1.16]{flowa}).

In~\cite[Proposition~1.4]{flowa} it was shown that for a finite graph $E$ such that $\LL(E)$ is simple, removing a source vertex would not change the category of the corresponding Leavitt path algebra up to the Morita equivalent. We need a similar result in the graded setting without an extra assumption of simplicity. Thanks to Theorem~\ref{grmorim11}, once this is proved it gives the nongraded statement naturally. 

\begin{proposition}\label{valenjov}
Let $E$ be a finite graph with no sinks and at least two vertices. Let $v\in E^0$ be a source. Then $\LL(E_{\backslash v})$ is graded Morita equivalent to $\LL(E)$. Consequently $\LL(E_{\backslash v})$ is Morita equivalent to $\LL(E)$. 
\end{proposition}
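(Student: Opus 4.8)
The plan is to realize $\LL(E_{\backslash v})$ as a corner of $\LL(E)$ cut by a full homogeneous idempotent, so that Example~\ref{idempogr} and Theorem~\ref{grmorim11} immediately give both the graded and the ungraded Morita equivalence. Write $E^0=\{v,u_1,\dots,u_n\}$ with $v$ a source, and set $e=u_1+\dots+u_n\in\LL(E)$. Since each $u_i$ has degree zero and $e$ is a sum of orthogonal idempotents, $e$ is a homogeneous idempotent of degree zero. The first step is to identify $e\LL(E)e$ with $\LL(E_{\backslash v})$ as graded $K$-algebras: the vertices of $E_{\backslash v}$ are exactly $u_1,\dots,u_n$, and the edges of $E_{\backslash v}$ are exactly those edges of $E$ whose source is not $v$ (no edge of $E$ ends at $v$ since $v$ is a source). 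One checks that the generators $\{u_i,\alpha,\alpha^*\}$ of $\LL(E_{\backslash v})$ sit inside $e\LL(E)e$ and satisfy the defining relations (1)--(4) of Definition~\ref{LPA}; the only relation needing care is the Cuntz--Krieger relation (4) at each $u_i$, which reads the same in $E$ and in $E_{\backslash v}$ precisely because no edge emitted by $u_i$ can have source $v$. Conversely $e\LL(E)e$ is spanned by monomials $e\mu\gamma^*e=\mu\gamma^*$ with $s(\mu),s(\gamma)\in\{u_1,\dots,u_n\}$, and since $v$ is a source any such path avoids $v$ entirely, so $\mu,\gamma$ are paths in $E_{\backslash v}$; hence the inclusion is onto. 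This gives a graded isomorphism $e\LL(E)e\cong_{\gr}\LL(E_{\backslash v})$.

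The second step is to show $e$ is full, i.e.\ $\LL(E)e\LL(E)=\LL(E)$. Equivalently it suffices to show $v\in\LL(E)e\LL(E)$. Here we use that $v$ is a source together with the hypothesis that $E$ has no sinks: $s^{-1}(v)\neq\varnothing$, so the Cuntz--Krieger relation gives $v=\sum_{\{\alpha\in E^1\mid s(\alpha)=v\}}\alpha\alpha^*$. For each such $\alpha$ we have $r(\alpha)\neq v$ (again because $v$ is a source, no edge ends at $v$), so $r(\alpha)\in\{u_1,\dots,u_n\}$, whence $r(\alpha)=r(\alpha)e=er(\alpha)$ and $\alpha\alpha^*=\alpha r(\alpha)\alpha^*=\alpha e\alpha^*\in\LL(E)e\LL(E)$. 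Summing, $v\in\LL(E)e\LL(E)$, so together with $u_i=u_ie\in\LL(E)e\LL(E)$ we get that all vertices, hence $1=\sum_{w\in E^0}w$, lie in $\LL(E)e\LL(E)$; thus $e$ is a full homogeneous idempotent.

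With these two steps done, Example~\ref{idempogr} applied to the graded ring $A=\LL(E)$ and the full homogeneous idempotent $e$ yields a graded equivalence $\Gr\LL(E)\approx_{\gr}\Gr e\LL(E)e$, and via the graded isomorphism $e\LL(E)e\cong_{\gr}\LL(E_{\backslash v})$ this reads $\Gr\LL(E)\approx_{\gr}\Gr\LL(E_{\backslash v})$, i.e.\ $\LL(E_{\backslash v})$ and $\LL(E)$ are graded Morita equivalent. Finally, by Theorem~\ref{grmorim11} (the equivalence of (1) and (2)), graded equivalence of $\Gr\LL(E_{\backslash v})$ and $\Gr\LL(E)$ forces graded equivalence of $\Modd\LL(E_{\backslash v})$ and $\Modd\LL(E)$, which in particular is an equivalence of the module categories, giving the stated (ungraded) Morita equivalence $\LL(E_{\backslash v})\sim_{\mathrm{Morita}}\LL(E)$.

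The routine parts are the relation-checking in step one and the direct-limit/spanning bookkeeping; the only genuine point of the argument, and the step I expect to be the main obstacle, is the fullness of $e$ in step two. It is exactly there that both hypotheses are used in an essential way — that $v$ is a \emph{source} (so that $r(\alpha)\neq v$ for every $\alpha$ emitted by $v$, keeping us inside the corner) and that $E$ has \emph{no sinks} (so that $s^{-1}(v)$ is nonempty and the Cuntz--Krieger relation at $v$ is available to express $v$ through edges landing in $e\LL(E)e$). If $v$ were a sink the argument would collapse, which is consistent with the hypotheses of the proposition; note also that the assumption $|E^0|\geq 2$ is what makes $e\neq 0$, so $E_{\backslash v}$ is a genuine nonempty graph.
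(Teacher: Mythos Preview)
Your approach is essentially the same as the paper's: realise $\LL(E_{\backslash v})$ as the corner $e\LL(E)e$ for the homogeneous idempotent $e=\sum_{u\neq v}u$, show $e$ is full, and then invoke Example~\ref{idempogr} and Theorem~\ref{grmorim11}. There is, however, one genuine gap. In step one you produce a graded homomorphism $\LL(E_{\backslash v})\to e\LL(E)e$ from the universal property and argue surjectivity, but you never justify injectivity; calling the map an ``inclusion'' presupposes exactly what needs to be proved. A priori $\LL(E_{\backslash v})$ is an abstract algebra given by generators and relations, and the induced map to $\LL(E)$ could have a kernel. The paper closes this gap by invoking the graded uniqueness theorem (Tomforde), which applies because the images of the vertices $u_i$ are nonzero in $\LL(E)$; you should insert that one line.

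On the other hand, your fullness argument in step two is more elementary than the paper's and worth keeping: the paper appeals to the classification of graded ideals via hereditary saturated subsets (\cite[Theorem~5.3]{amp}), whereas you use only the Cuntz--Krieger relation at $v$ together with the observation that every edge out of $v$ lands at some $u_i$ (since $v$ is a source), giving $v=\sum_{s(\alpha)=v}\alpha e\alpha^*\in\LL(E)e\LL(E)$ directly. This bypasses the ideal-structure machinery entirely and makes the use of both hypotheses (no sinks, $v$ a source) completely transparent.
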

\begin{proof}
Since $E_{\backslash v}$ is a complete subgraph of $E$, there is a  graded algebra homomorphism $\phi:\LL(E_{\backslash v}) \rightarrow \LL(E)$, such that $\phi(u)=u$, $\phi(e)=e$ and $\phi(e^*)=e^*$, where $u \in E^0 \backslash \{v\}$ and $e\in E^1$.  
The graded uniqueness theorem~\cite[Theorem~4.8]{tomforde} implies  $\phi$ is injective. Thus  $\LL(E_{\backslash v})  \cong_{\gr} 
\phi(\LL(E_{\backslash v}))$. It is not difficult to see that $\phi(\LL(E_{\backslash v})) =p\LL(E) p$, where $p=\sum_{u \in E_{\backslash v}^0}u$. 
This immediately implies that $\LL(E_{\backslash v})$ is  graded Morita equivalent to $p\LL(E)p$. On the other hand, the (graded) ideal generated by $p=\sum_{u \in E_{\backslash v}^0}u$ coincides with the (graded) ideal generated by $\{u \mid u \in E_{\backslash v}^0 \}$. But the smallest hereditary and saturated subset of $E^0$ containing $E_{\backslash v}^0$ is $E^0$. Thus by~\cite[Theorem~5.3]{amp} the ideal generated by $\{u \mid u \in E_{\backslash v}^0 \}$ is $\LL(E)$. This shows $p$ is a full homogeneous idempotent in $\LL(E)$. 
Thus $p\LL(E)p$ is graded Morita equivalence to $\LL(E)$ (see Example~\ref{idempogr}). Putting these together we get $\LL(E_{\backslash v})$ is graded Morita equivalent to $\LL(E)$. The last part of the Proposition follows from the Green-Gordon theorem (see~\S\ref{hgdeiii}). 
\end{proof}

We are in a position to relate the notion of (strongly) shift equivalent in matrices with the graded Morita theory of Leavitt path algebras associated to these matrices. 

\begin{proposition}\label{hgysweet}\hfill
\begin{enumerate}[\upshape(1)]
\item Let $E$ be an essential graph and $F$ be a graph obtained from an in-splitting or out-splitting of the graph $E$. Then 
$\LL(E)$ is graded Morita equivalent to $\LL(F)$. 

\item For essential graphs $E$ and $F$, if the adjacency matrices $A_E$ and $A_F$  are strongly shift equivalent then $\LL(E)$ is graded Morita equivalent to $\LL(F)$.

\item  For graphs $E$ and $F$ with no sinks, if $\LL(E)$ is graded Morita equivalent to $\LL(F)$, then the adjacency matrices $A_E$ and $A_F$  are shift equivalent.

\end{enumerate}
\end{proposition}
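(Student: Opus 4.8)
\textbf{Proof strategy for Proposition~\ref{hgysweet}.}

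The plan is to prove the three parts in order, with part (1) doing most of the work and part (2) following by an easy induction. For part (1), I would first invoke the structural descriptions of in-splitting and out-splitting graphs. For out-splitting $F=E_s(\mathcal P)$, the standard move in symbolic dynamics (see~\cite[\S2.4]{lindmarcus}) decomposes the adjacency relation: there are $0$--$1$ matrices $R,S$ encoding the partition and the ``division/collection'' maps so that $A_E = R S$ and $A_F = S R$ (this is the prototype of elementary shift equivalence; see~\S\ref{mooiw}). The Leavitt-path-algebra side of this was worked out in the $C^*$-algebra literature and adapted by~\cite{flowa}; concretely, one constructs an explicit graded $*$-algebra homomorphism between $\LL(E)$ and a corner of $\LL(F)$ (or vice versa), using the new vertices of $F$ as a refinement of the old ones. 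The key point to check is that this homomorphism is \emph{graded} for the natural $\Z$-grading --- which it is, because in/out-splitting replaces each edge of $E$ by an edge of $F$ (degree $1$ goes to degree $1$, vertices to sums of vertices, ghost edges to ghost edges), so the map respects $\deg$. Then the graded uniqueness theorem~\cite[Theorem~4.8]{tomforde} gives injectivity, and one identifies the image with $p\LL(F)p$ for $p=\sum_{v\in S}v$ a sum of vertices; showing $p$ is a \emph{full} homogeneous idempotent reduces (as in the proof of Proposition~\ref{valenjov}) to checking that the smallest hereditary saturated subset containing those vertices is all of $F^0$, which holds because $E$ (hence $F$, by~\cite[Lemma~1.16]{flowa}) is essential and in particular has no sinks and the vertex set is ``reachable''. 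Finally Example~\ref{idempogr} upgrades $\LL(E)\cong_{\gr}p\LL(F)p$ to a graded Morita equivalence $\LL(E)\approx_{\gr}\LL(F)$.

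For part (2): by definition (see~\S\ref{mooiw}) strong shift equivalence is the transitive closure of elementary shift equivalence $A=RS,\ B=SR$, and by Williams' theorem~\cite{williams} (as recalled in~\S\ref{willi}) an elementary shift equivalence of adjacency matrices of essential graphs is realised, up to conjugacy of edge shifts, by a single out-splitting followed by an out-amalgamation (equivalently, by a chain of splittings). Since graded Morita equivalence is transitive and symmetric, part (1) applied along such a chain gives $\LL(E)\approx_{\gr}\LL(F)$ whenever $A_E\sim_S A_F$. One small wrinkle: the intermediate graphs in a splitting chain must again be essential so that part (1) applies --- but this is exactly~\cite[Lemma~1.16]{flowa}, which says essentiality is preserved by in/out-splitting (and by source elimination), so the induction goes through.

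For part (3): here $\LL(E)\approx_{\gr}\LL(F)$ for graphs with no sinks. By Theorem~\ref{grmorim11} this graded Morita equivalence induces an order-preserving isomorphism $K_0^{\gr}(\LL(E))\cong K_0^{\gr}(\LL(F))$; moreover, because the equivalence is a graded functor commuting with the suspension functors $\mathcal T_\alpha$, the induced map commutes with the action of $x$, i.e.\ it is a $\Z[x,x^{-1}]$-module isomorphism preserving the positive cone. Now Lemma~\ref{mmpags} identifies $\big(K_0^{\gr}(\LL(E)),K_0^{\gr}(\LL(E))^+\big)$ with Krieger's dimension group $(\Delta_{A_E^t},\Delta_{A_E^t}^+)$ as $\Z[x,x^{-1}]$-modules (the $x$-action corresponding to $\delta_{A_E^t}$), and similarly for $F$. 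Hence $\delta_{A_E^t}$ and $\delta_{A_F^t}$ are conjugate via an order isomorphism of dimension groups, which by Krieger's theorem~\cite[Theorem~4.2]{krieger1} (see also~\cite[Theorem~7.5.8]{lindmarcus}) means $A_E^t$ is shift equivalent to $A_F^t$; and shift equivalence of matrices is preserved under transpose (as noted just before Corollary~\ref{h99}), so $A_E\sim_S^{\mathrm{SE}} A_F$, i.e.\ $A_E$ and $A_F$ are shift equivalent.

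\textbf{Main obstacle.} The delicate step is the construction in part (1): verifying that the explicit algebra map between $\LL(E)$ and a corner of $\LL(F)$ coming from in/out-splitting is a well-defined \emph{graded} homomorphism onto $p\LL(F)p$ with $p$ full. The grading-compatibility is routine once the map is written down, but setting up the right map (especially for in-splitting, where it is the \emph{ghost} edges that get refined and one may prefer to pass through the opposite algebra or dualise) requires care; I would handle in-splitting by applying the out-splitting argument to $E^{\op}$ (reversing all edges), using that $\LL(E^{\op})\cong_{\gr}\LL(E)^{\op}$ and that fullness and graded Morita equivalence are preserved under $(-)^{\op}$. Everything else is assembly of results already in the excerpt.
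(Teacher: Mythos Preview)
Your plan for parts~(2) and~(3) coincides with the paper's argument: for~(2) the paper likewise appeals to Williams' theorem and the Decomposition theorem to write $F$ as a chain of in/out-splittings and amalgamations of $E$, notes that each intermediate graph stays essential by~\cite[Lemma~1.16]{flowa}, and applies part~(1) along the chain; for~(3) the paper simply invokes the $\Z[x,x^{-1}]$-module isomorphism on $K_0^{\gr}$ induced by graded Morita equivalence and then Corollary~\ref{h99}, which packages exactly the Krieger/Lemma~\ref{mmpags} argument you spell out.

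Part~(1) is where your proposal diverges from the paper, and there is a genuine gap. For out-splitting the paper does not run a corner argument at all: by~\cite[Theorem~2.8]{aalp} there is already a graded $K$-algebra \emph{isomorphism} $\LL(E)\cong_{\gr}\LL(E_s(\mathcal P))$, so graded Morita equivalence is immediate. The corner/full-idempotent argument is used only for in-splitting, via the explicit map $\pi:\LL(E)\to\LL(E_r(\mathcal P))$ of~\cite[Proposition~1.11]{flowa}, which sends an edge $e$ with $s(e)=v$ to $T_e=\sum_{f\in s^{-1}(v)} e_1 f_i f_1^*$. This is \emph{not} an edge of $E_r(\mathcal P)$; it is a sum of length-three monomials, and the reason it is graded is that each summand has degree $1+1-1=1$. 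So your stated justification (``each edge of $E$ goes to an edge of $F$'') is incorrect for in-splitting, and your fallback --- reduce in-splitting to out-splitting via the opposite graph using ``$\LL(E^{\op})\cong_{\gr}\LL(E)^{\op}$'' --- relies on an identity that is neither proved nor cited, and is in fact not available: the $*$-involution on $\LL(E)$ gives only a grading-\emph{reversing} isomorphism $\LL(E)\to\LL(E)^{\op}$, while the CK2 relation at a vertex $v$ in $\LL(E)^{\op}$ is indexed by edges with $s(e)=v$ and in $\LL(E^{\op})$ by edges with $r(e)=v$, so the two algebras encode different data. (The paper's example immediately following this proposition, with $A_E=\left(\begin{smallmatrix}19&5\\4&1\end{smallmatrix}\right)$, already shows that passage to the opposite graph is far from innocuous at the graded level.) The fix is simply to use the map $\pi$ above directly, check it is graded for the reason just given, observe $\pi(\LL(E))=p\LL(E_r(\mathcal P))p$ with $p=\sum_{v\in E^0}v_1$, and verify fullness of $p$ via the hereditary--saturated argument exactly as in Proposition~\ref{valenjov}.
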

\begin{proof}
(1) First suppose $E$ is an essential graph and $E_r(\mathcal P)$ the in-split graph from $E$ using a partition $\mathcal P$. 
For each $v\in E^0$, define $Q_v=v_1$, which exists by the assumption that $E$ has no sources. For $e \in \mathcal E_i^v$, define $T_e=\sum_{f\in s^{-1}(v)} e_1f_if^*_1$ and $T^*_e=\sum_{f\in s^{-1}(v)} f_1f^*_ie_1^*$. 
In~\cite[Proposition~1.11]{flowa}, it was proved that $\{Q_v,T_e,T^*_e \mid v\in E^0,e\in E^1\}$ is an $E$-family which in turn induces a $K$-algebra homomorphism 
\begin{align}\label{jonhte1}
\pi:\LL(E) &\longrightarrow \LL(E_r(\mathcal P)),\\
v &\longmapsto Q_v=v_1, \notag\\
e &\longmapsto T_e=\sum_{f\in s^{-1}(v)} e_1f_if^*_1, \notag \\
e^* &\longmapsto T^*_e=\sum_{f\in s^{-1}(v)} f_1f^*_ie_1^*. \notag
\end{align}
Furthermore, it was shown that $\pi(\LL(E))=p \LL(E_r(\mathcal P))p$ where $p=\pi(1_{\LL(E)})=\sum_{v\in E^0}v_1$.

Since $\pi(v)=v_1 \not = 0$ (see~\cite[Lemma~1.5]{goodearl}), the graded uniqueness theorem~\cite[Theorem~4.8]{tomforde} implies  
$\pi$ is injective. Furthermore~(\ref{jonhte1}) shows that $\pi$ is a graded map. Thus 
\[\LL(E) \cong_{\gr} p \LL(E_r(\mathcal P))p.\] 
We will show that $p$ is a full idempotent in $\LL(E_r(\mathcal P))$. The (graded) ideal generated by $p=\sum_{v\in E^0}v_1$ coincides with the (graded) ideal generated by $\{v_1 \mid v \in E^0 \}$. But the smallest hereditary and saturated subset of $E_r(\mathcal P)^0$ containing $\{v_1 \mid v \in E^0 \}$ is $E_r(\mathcal P)^0$. Thus by~\cite[Theorem~5.3]{amp} the ideal generated by $\{v_1 \mid v \in E^0 \}$ is $\LL(E_r(\mathcal P))$. This shows $p$ is a full homogeneous idempotent in $\LL(E)$. 
Now in Theorem~\ref{grmorim11}(4) by setting $e=p$, $n=1$, $B=\LL(E)$ and $A=\LL(E_r(\mathcal P))$, we get  that $\LL(E)$ is graded Morita equivalent to $\LL(E_r(\mathcal P))$. 

On the other hand, if $E_s(\mathcal P)$ is the out-split graph from $E$ using a partition $\mathcal P$, then by~\cite[Theorem~2.8]{aalp}, there is a graded $K$-algebra isomorphism $\pi:\LL(E)\rightarrow  \LL(E_s(\mathcal P)).$ Again, Theorem~\ref{grmorim11}(4) implies that $\LL(E)$ is graded Morita equivalent to $\LL(E_s(\mathcal P))$.

(2) If $A_E$ is strongly shift equivalent to $A_F$, a combination of the Williams theorem~\cite[Theorem~7.2.7]{lindmarcus} and the Decomposition theorem~\cite[Theorem~7.1.2, Corollary~7.1.5]{lindmarcus} implies that the graph $F$ can be obtained from $E$  by a sequence of out-splittings, in-splittings, out-amalgamations, and in-amalgamation. All the graphs appear in this sequence are essential (see~\S\ref{hdsweg}). Now a repeated application of part (1) gives that $\LL(E)$ is graded Morita equivalent to $\LL(F)$. 


(3) Since $\Gr \LL(E) \approx_{\gr} \Gr \LL(F)$, there is an order preserving $\mathbb Z[x,x^{-1}]$-module  isomorphism
$K_0^{\gr}(\LL(E)) \cong_{\gr} K_0^{\gr}(\LL(F))$ (see~\S\ref{hgdeiii}). Thus by Corollary~\ref{h99}, $A_E$ and $A_F$  are shift equivalent.
\end{proof}

\begin{remark}
Proposition~\ref{hgysweet}(3) shows that if $\LL(E) \approx_{\gr} \LL(F)$ then  the adjacency matrices of $E$ and $F$ are shift equivalent. One thinks that the converse of this statement is also valid. In fact, if $A_E$ is shift equivalent to $A_F$ then by Corollary~\ref{h99}, there is an order preserving $\mathbb Z[x,x^{-1}]$-module isomorphism $K_0^{\gr}(\LL(E))  \cong K_0^{\gr}(\LL(F))$. Since $\LL(E)$ and $\LL(F)$ are strongly graded, this implies  $K_0(\LL(E)_0)  \cong K_0(\LL(F)_0)$ as partialy ordered abelian groups. Since $\LL(E)_0$ and $\LL(F)_0$ are ultramatricial algebras, by~\cite[Coroallary~15.27]{goodearlbook} $\LL(E)_0$ is Morita equivalent to $\LL(F)_0$. Now the following diagram shows that $\Gr \LL(E)$ is equivalent to $\Gr \LL(F)$.
\begin{equation*}
\xymatrix{
\Modd \LL(E)_0 \ar[rr]   \ar[d]_{-\otimes \LL(E)} && \Modd \LL(F)_0 \ar[d]^{-\otimes \LL(F)}\\
\Gr \LL(E) \ar[rr]&& \Gr \LL(F) .
}
\end{equation*}
However it is not clear whether this equivalence is graded as in Definition~\ref{grdeffsa}(1). 

\end{remark}

Recall that for a graph, the associated Leavitt path algebra is purely infinite simple unital, if and only if the graph is finite, any vertex is connected to a cycle and any cycle has an exist (see~\cite{aap06} and~\cite[p.~205]{flowa}). Note that for a finite graph, the condition of not having a sink is equivalent to any vertex be connected to a cycle. Thus by Theorem~\ref{sthfin}, purely infinite simple unital Leavitt path algebras are strongly graded.

\begin{example}[{\sc Purely infinite simple and its transpose are not graded Morita Equivalent}]
We have seen most of the results already proved in the literature on Morita equivalence, such as in-splitting, out-splitting, and removing of the sources can be extended to a stronger graded Morita equivalence. However this is not always the case. In~\cite[Proposition~3.10]{flowa}, it was shown that for a finite graph $E$ without sources such that $\LL(E)$ is purely infinite simple, $\LL(E)$ and $\LL(E^{\op})$ are Morita equivalent. (Here $E^{\op}$ is the {\it opposite} or {\it transpose} of the graph $E$, i.e., $E^{\op}$ is obtained from $E$ by reversing the arrows, so  $A_E^t=A_{E^{\op}}$. In~\cite{flowa}, $E^{\op}$ is denoted by $E^t$.) However there are examples of the graph $E$ such that $\LL(E)$ and $\LL(E^{\op})$ are not graded Morita equivalent. Consider the graph $E$ with the adjacency matrix  
\begin{equation*}
A_E=\left(
\begin{array}{cc}
 19 & 5 \\
 4 & 1
\end{array}
\right).
\end{equation*}
The Leavitt path algebra $\LL(E)$ is purely infinite simple unital algebra with no sources. If $\LL(E)$ is graded Morita equivalent to $\LL(E^{\op})$, by Proposition~\ref{hgysweet}(3),
$A_E$ and $A_{E^{\op}}=A^t_E$ are shift equivalent. But it is known that $A_E$ and $A^t_E$ are not shift equivalent (see~\cite[Example~7.4.19]{lindmarcus}).
\end{example}

\begin{example}[\sc{Strongly shift equivalent does not imply isomorphism}]\label{hhyyuu}

By Proposition~\ref{hgysweet} if two essential graphs are strongly shift equivalent then their associated Leavitt path algebras 
are graded Morita equivalent. The following example shows that strongly shift equivalent, however, does not imply the Leavitt path algebras are isomorphic. 

Consider the graphs 

\begin{equation}
E:\,\,\,\,\,\,\,\,\,\,\,\,{\def\labelstyle{\displaystyle}
\xymatrix{
  \bullet \ar@(lu,ld) \ar@/^0.9pc/[r] \ar@/^1.4pc/[r]  & \bullet \ar@/^0.9pc/[l]   &   
}}
E^{\op}:\,\,\,{\def\labelstyle{\displaystyle}
\xymatrix{
  \bullet  \ar@/^0.9pc/[r] \ar@/^1.4pc/[r]  & \bullet \ar@/^0.9pc/[l]  \ar@(ru,rd) &   
}} 
\end{equation}

The adjacency matrices of $E$ and $E^{\op}$ are strongly shift equivalent as the following computation shows. Thus by Proposition~\ref{hgysweet}, $\LL(E)\approx_{\gr}\LL(E^{\op})$. However we will show $\LL(E) \not \cong_{\gr} \LL(E^{\op})$.  

First,  
\(A_E={\left(
\begin{array}{cc}
 1 & 2 \\
 1 & 0
\end{array}
\right)}\) 
and 
\(A_{E^{\op}}={\left(
\begin{array}{cc}
 1 & 1 \\
 2 & 0
\end{array}
\right)}\). Notice that $A_E=A_{E^{\op}}^t$. 
Let  \(R_1={\left(
\begin{array}{ccc}
 1 & 1 & 0 \\
 0 & 0 & 1
\end{array}
\right)}\) and 
\(S_1={\left(
\begin{array}{cc}
 1 & 1 \\
 0 & 1 \\
 1 & 0
\end{array}
\right)}\). Then $A_E=R_1 S_1$ and set $E_1:=S_1 R_1={\left(
\begin{array}{ccc}
 1 & 1 & 1 \\
 0 & 0 & 1 \\
 1 & 1 & 0
\end{array}
\right)}$.
Let 
\(R_2={\left(
\begin{array}{ccc}
 0 & 1 & 1 \\
 1 & 0 & 0 \\
 0 & 0 & 1
\end{array}
\right)}\) and 
\(S_2={\left(
\begin{array}{ccc}
 0 & 0 & 1 \\
 0 & 0 & 1 \\
 1 & 1 & 0
\end{array}
\right)}\). Then $E_1=R_2 S_2$ and set $E_2:=S_2 R_2=
{\left(
\begin{array}{ccc}
 0 & 0 & 1 \\
 0 & 0 & 1 \\
 1 & 1 & 1
\end{array}
\right)}$. Finally, let 
\(R_3={\left(
\begin{array}{cc}
 1 & 0 \\
 1 & 0 \\
 1 & 1
\end{array}
\right)}\) and 
\(S_3={\left(
\begin{array}{ccc}
 0 & 0 & 1 \\
 1 & 1 & 0
\end{array}
\right)}\).  
Then $E_2=R_3S_3$ and $A_{E^{\op}}=S_3R_3$. This shows \[A_E\sim_{ES} E_1 \sim_{ES} E_2 \sim_{ES} A_{E^{\op}}.\]
Thus $A_E\sim_S A_{E^{\op}}$. 

Now suppose there is an order $\mathbb Z[x,x^{-1}]$-module isomorphism $\phi:K^{\gr}_0(\LL(E)) \rightarrow  K^{\gr}_0(\LL(E^{\op}))$. Since as abelian groups $K^{\gr}_0(\LL(E))\cong K^{\gr}_0(\LL(E^{\op}))\cong \mathbb Z[1/2]\bigoplus  \mathbb Z[1/2]$, $\phi$ is a $2\times2$ matrix of the form 
$\left(
\begin{array}{cc}
 a & b \\
 c & d 
\end{array}
\right)$, 
where $a,b,c,d\in \mathbb Q$. Furthermore, $[\LL(E)]=(1,1)\in  \mathbb Z[1/2]\bigoplus  \mathbb Z[1/2]$ and similarly, 
$[\LL(E^{\op})]=(1,1)\in  \mathbb Z[1/2]\bigoplus  \mathbb Z[1/2]$.
But then $\phi(A_{E^t} (1,1))=A_{{E^{\op}}^t} \phi(1,1)=A_{{E^{\op}}^t} (1,1)$ (see also Example~\ref{gfrt}). A quick calculation now shows that this is not possible. 


This example shows that although there is an order isomorphism $K^{\gr}_0(\LL(E)) \cong K^{\gr}_0(\LL(E^{\op}))$ as $\mathbb Z[x,x^{-1}]$-modules, but $\LL(E) \not \cong_{\gr} \LL(E^{\op})$.  This implies that in the classification conjecture (see~\S\ref{conjisi}), the assumption of pointed isomorphisms can't be relaxed. 
\end{example}

We are in a position to settle the graded conjecture (see~\S\ref{conjisi} and~\cite[Conjecture~1]{hazann})  for the case of purely infinite simple Leavitt path algebras up to grading. Namely, we are able to show that if the graded dimension groups are isomorphic, then the Leavitt path algebras are isomorphic. The theorem guarantees an isomorphism
between the algebras, but  not a graded isomorphism. 

\begin{theorem}\label{thucomingtobal}
Let $E$ and $F$ be graphs such that $\LL(E)$ and $\LL(F)$ are purely infinite simple unital algebras. Then $\LL(E) \cong \LL(F)$ if  there is an order preserving $\mathbb Z[x,x^{-1}]$-module isomorphism
\begin{equation}\label{noperfect}
\big (K_0^{\gr}(\LL(E)),[\LL(E)]\big ) \cong \big (K_0^{\gr}(\LL(F)),[\LL(F)]\big ).
\end{equation}
\end{theorem}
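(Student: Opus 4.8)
The plan is to translate the hypothesis on the graded dimension groups into a statement about shift equivalence of adjacency matrices, and then invoke the deep classification results from symbolic dynamics together with the structure theory of purely infinite simple unital Leavitt path algebras. First I would note that since $\LL(E)$ and $\LL(F)$ are purely infinite simple unital, the graphs $E$ and $F$ are finite with no sinks (indeed essential, after discarding sources, which by Proposition~\ref{valenjov} changes neither the graded Morita class nor the isomorphism type up to Morita equivalence); in particular $\LL(E)$ and $\LL(F)$ are strongly $\mathbb Z$-graded by Theorem~\ref{sthfin}. By Corollary~\ref{h99}, the assumed order-preserving $\mathbb Z[x,x^{-1}]$-module isomorphism $K_0^{\gr}(\LL(E)) \cong_{\gr} K_0^{\gr}(\LL(F))$ gives that the adjacency matrices $A_E$ and $A_F$ are shift equivalent.

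Now the key extra input: in symbolic dynamics, for irreducible nontrivial matrices, shift equivalence over $\mathbb Z_{\geq 0}$ is known to coincide with shift equivalence over $\mathbb Z$ in many situations, but more to the point, a celebrated result (Kim--Roush, building on Franks and Boyle--Handelman, or via the classification of Cuntz--Krieger algebras by Rørdam) tells us that for two such matrices, shift equivalence together with agreement of the associated ordered $K$-theory including the position of the distinguished order unit forces the Cuntz--Krieger algebras $\mathcal O_{A_E}$ and $\mathcal O_{A_F}$ to be isomorphic. The translation I would use is: the graded dimension group $(K_0^{\gr}(\LL(E)),K_0^{\gr}(\LL(E))_+,[\LL(E)])$, via Lemma~\ref{mmpags}, \emph{is} the Krieger dimension triple $(\Delta_{A_E^t},\Delta_{A_E^t}^+,\delta_{A_E^t})$ with the class of the unit. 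Forgetting the $\mathbb Z[x,x^{-1}]$-action but remembering the order unit, and passing to cokernel-type invariants, one recovers exactly the invariant $(\coker(I - A_E^t), \dots)$ that classifies $\mathcal O_{A_E}$ by Rørdam's theorem. Hence $\mathcal O_{A_E}\cong\mathcal O_{A_F}$; and since a purely infinite simple unital Leavitt path algebra $\LL(E)$ has $K_0$-data matching that of $\mathcal O_{A_E}$, the algebraic Kirchberg--Phillips-type classification for purely infinite simple unital Leavitt path algebras (Abrams--Louly--Pardo--Smith, using the $K_0$ plus $K_1$ data and the position of $[1]$) yields $\LL(E)\cong\LL(F)$.

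More precisely, the cleanest route avoids $C^*$-algebras: I would cite the classification theorem for purely infinite simple unital Leavitt path algebras which says that $\LL(E)\cong\LL(F)$ whenever $(K_0(\LL(E)),[\LL(E)])\cong(K_0(\LL(F)),[\LL(F)])$ and the $K_1$-groups agree, and then argue that the hypothesis on the \emph{graded} dimension groups is strong enough to produce these non-graded isomorphisms. The bridge is that for a strongly graded ring $A$ we have $K_0^{\gr}(A)\cong K_0(A_0)$, and $A_0 = \LL(E)_0$ is the ultramatricial algebra $\varinjlim L_{0,n}$; a shift equivalence $A_E\sim_S A_F$ produces a Morita equivalence $\LL(E)_0\sim_M\LL(F)_0$ by \cite[Corollary~15.27]{goodearlbook}. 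Combined with the standard computation $(K_0(\LL(E)),[\LL(E)])\cong(\coker(I-A_E^t),\text{class of }(1,\dots,1))$, a careful bookkeeping of how the order unit sits in the direct-limit picture of \S\ref{peeme} lets one deduce the required pointed $K_0$-isomorphism from \eqref{noperfect}; the $K_1$ part (a free abelian group whose rank is determined by the same matrix data) matches automatically.

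The main obstacle, and the step that needs the most care, is precisely the \emph{descent from graded to ungraded pointed $K_0$}: a priori, an order-preserving $\mathbb Z[x,x^{-1}]$-module isomorphism of the graded Grothendieck groups sending $[\LL(E)]$ to $[\LL(F)]$ is more information than the bare shift equivalence of Corollary~\ref{h99} but is \emph{not obviously} the same as a pointed isomorphism $(K_0(\LL(E)),[\LL(E)])\cong(K_0(\LL(F)),[\LL(F)])$, since the latter is a quotient-type ($\coker(I-A^t)$) invariant rather than a direct-limit one. Reconciling these — showing that the position of the identity in the graded picture controls the position of the identity in $\coker(I-A_E^t)$, under the normalisation that all shift equivalences respect — is where the symbolic-dynamics machinery (the equivalence of the Krieger/Bowen--Franks invariants with the unit, as in \cite[\S7.4, \S7.5]{lindmarcus} and Franks' classification of irreducible shifts up to flow equivalence, strengthened to conjugacy-level statements under shift equivalence plus matching signs/order units) has to be imported carefully; I expect the proof to spend most of its length on exactly this identification, after which the application of the Leavitt-path-algebra classification theorem is immediate.
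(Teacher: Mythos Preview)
Your overall strategy is close to the paper's, but the emphasis is inverted and two concrete ingredients are missing.

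First, the step you flag as the ``main obstacle'' --- the descent from the pointed graded $K_0^{\gr}$ to the pointed ungraded $K_0$ --- is in fact the \emph{easy} part, and the paper dispatches it in one line. The point is that for a strongly graded ring there is an exact sequence
\[
K_0^{\gr}(\LL(E)) \xrightarrow{\;T_1-\id\;} K_0^{\gr}(\LL(E)) \xrightarrow{\;U\;} K_0(\LL(E)) \longrightarrow 0
\]
(this is the content of \cite{haziso}), where $T_1$ is the suspension and $U$ the forgetful map. Since the hypothesised isomorphism $\phi$ is a $\mathbb Z[x,x^{-1}]$-module map, it commutes with $T_1-\id$, hence induces an isomorphism $\phi_1$ on the cokernels, i.e.\ on $K_0$; and since $U$ sends $[\LL(E)]$ to $[\LL(E)]$, the map $\phi_1$ is automatically pointed. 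No ``careful bookkeeping'' in the direct-limit model is needed.

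Second, and more seriously, you never actually establish Morita equivalence of $\LL(E)$ and $\LL(F)$, yet this is required by the classification result you can legitimately invoke. The theorem you cite (an unconditional $(K_0,[1],K_1)$ classification for purely infinite simple unital Leavitt path algebras) was not available; what \cite{flowa} proves (their Theorem~2.5) is that \emph{Morita equivalence together with} a pointed $K_0$-isomorphism forces $\LL(E)\cong\LL(F)$. Your suggested route to Morita equivalence via $\LL(E)_0\sim_M\LL(F)_0$ runs exactly into the difficulty noted in the Remark after Proposition~\ref{hgysweet}: the resulting equivalence $\Gr\LL(E)\approx\Gr\LL(F)$ need not be \emph{graded}, so it does not yield $\Modd\LL(E)\approx\Modd\LL(F)$. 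The paper instead passes to source-free graphs $E',F'$ (Proposition~\ref{valenjov}), uses Corollary~\ref{h99} to get shift equivalence of $A_{E'},A_{F'}$, deduces equality of Bowen--Franks groups and of $\det(I-A)$, and then applies Franks' theorem to obtain \emph{flow} equivalence; the graph moves realising flow equivalence (in/out-splitting and expansion) are known from \cite{flowa} to preserve Morita equivalence of the associated Leavitt path algebras. With Morita equivalence in hand and the pointed $K_0$-isomorphism from the first paragraph, \cite[Theorem~2.5]{flowa} finishes the proof.
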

\begin{proof}
Equation~\ref{noperfect} gives an isomorphism  
 $\phi: K^{\gr}_0(\LL(E)) \rightarrow K^{\gr}_0(\LL(F)),$ where $[\LL(E)]$ is sent to $[\LL(F)].$ 
The main result of \cite{haziso} shows that the diagram below is commutative, where $U$ is the forgetful functor and $T_1$ is the suspension functor and the isomorphism $\phi_1$ is induced from the commutativity of the left diagram.   
\begin{equation*}
\xymatrix@=15pt{
K^{\gr}_0(\LL(E)) \ar[d]^{\phi} \ar[r]^{T_1-\id} &
K^{\gr}_0(\LL(E)) \ar[d]^{\phi} \ar[r]^{U}& K_0(\LL(E))\ar[d]^{\phi_1} \ar[r] &0\\
K^{\gr}_0(\LL(F)) \ar[r]^{T_1-\id}  &
K^{\gr}_0(\LL(F)) \ar[r]^{U}& K_0(\LL(F)) \ar[r] &0.}
\end{equation*}
 Thus we have an induced isomorphism  
 \begin{align}\label{hvvveeo}
 \phi_1: K_0(\LL(E)) &\longrightarrow K_0(\LL(F))\\
 [\LL(E)] &\longmapsto [\LL(F)].\notag
 \end{align}

Next we show that $\LL(E)$ is Morita equivalent to $\LL(F)$. Let $E'$ and $F'$ be graphs with no sources by repeatedly removing the sources from $E$ and $F$, respectively. By repeated application of Proposition~\ref{valenjov} we have $\LL(E) \approx_{\gr} \LL(E')$ and $\LL(F) \approx_{\gr} \LL(F')$, which in turn shows that 
there are order preserving $\mathbb Z[x,x^{-1}]$-modules isomorphisms  $K^{\gr}_0(\LL(E)) \cong  K^{\gr}_0(\LL(E'))$ and $K^{\gr}_0(\LL(F)) \cong  K^{\gr}_0(\LL(F'))$. Combining these with Equation~\ref{noperfect}, we get an order preserving $\mathbb Z[x,x^{-1}]$-modules isomorphism $K^{\gr}_0(\LL(E')) \cong  K^{\gr}_0(\LL(F'))$. 
Now Corollary~\ref{h99} implies that $A_{E'}$ is shift equivalent to $A_{F'}$. By~\cite[Theorem~7.4.17]{lindmarcus}, 
$\BF(A_{E'})\cong \BF(A_{F'})$, where $\BF$ stands for the Bowen-Franks groups. On the other hand, by~\cite[Exercise~7.4.4]{lindmarcus},
$\det(1-A_{E'})=\det(1-A_{F'})$. Since the matrices $A_{E'}$ and $A_{F'}$ are irreducible, the main theorem of Franks~\cite{franks} gives that $A_{E'}$ is flow equivalent to $A_{F'}$. Thus $A_{F'}$ can be obtained from $A_{E'}$ by a finite sequence of 
in/out-splitting and expansion of graphs (see~\cite{parrysullivan}). 
Each of these transformation preserve Morita equivalence (see the proof of Theorem~1.25 in~\cite{flowa}).  So $\LL(E')$ is Morita equivalent to $\LL(F')$. Again using Proposition~\ref{valenjov}, we get that  
$\LL(E)$ is Morita equivalent to $\LL(F)$. Now by~\cite[Theorem~2.5]{flowa}, 
this Morita equivalent together with $K$-group isomorphism~(\ref{hvvveeo}),  gives  $\LL(E) \cong \LL(F)$. 
\end{proof}

\section{Noncommutative algebraic geometry} \label{noncomui}

Let $K$ be a field. If $R$ is a commutative $K$-algebra which is generated by a finite number of elements of degree 1, then by  the celebrated work of Serre~\cite{serre}, 
the category of quasi-coherent sheaves on the scheme $\PProj(R)$ is equivalent to $\QGr R:= \Gr R/\Fdim R$, where $\Gr R$ is the category of $\mathbb Z$-graded modules over $R$ and $\Fdim R$ is the Serre subcategory of (direct limit of) finite dimensional submodules.  In particular when $R=K[x_0,x_1,\dots,x_n]$, then 
$\Qcoh \mathbb P^n$ is equivalent to  $\QGr K[x_0,x_1,\dots,x_n]$ 

Inspired by this, noncommutative algebraic geometry associates to a $\mathbb Z$-graded $K$-algebra $A$ with support $\mathbb N$, a ``noncommutative scheme'' $\PProj_{nc}(A)$ that is defined implicitly by declaring that the category of Òquasi-coherent sheavesÓ on $\PProj_{nc}(A)$ is 
$\QGr A:=\Gr A /\Fdim A$. When $A$ is coherent and $\grr A$ its category of finitely presented graded modules then $\qgr A := \grr A/\fdim A$ is viewed as the category of Òcoherent sheavesÓ on $\PProj_{nc}(A)$ (see~\cite{serre,artin,smith1} for more precise statements). 

For a finite graph $E$, Paul Smith~\cite{smith2} gave a description of the category $\QGr \mathcal P(E)$, where $\mathcal P(E)$ is the path algebra  associated to $E$,  in terms of easier to study categories of graded modules over Leavitt path algebras and ultramatricial algebras. Note that free algebras (on $n$ generators) are examples of path algebras (of the graph with one vertex and $n$ loops). Further, 
in~\cite{smith3} he showed that for two finite graphs $E$ and $F$ with no sinks or sources, if  their adjacency matrices are shift equivalent,  then the ``noncommutative schemes'' represented by their path algebras are the same, i.e.,  $\QGr \mathcal P(E) \cong \QGr \mathcal P(F)$.

Recall that, by assigning $1$ to edges and $0$ to vertices, the path algebra $\mathcal P(E)$ is a $\mathbb Z$-graded algebra with support $\mathbb N$. The category of $\mathbb Z$-graded right $\mathcal P(E)$-modules with degree-preserving homomorphisms is denoted by $\Gr \mathcal P(E)$ and we write $\Fdim \mathcal P(E)$ for its full subcategory of modules that are the sum of their finite-dimensional submodules. Since $\Fdim \mathcal P(E)$ is a localising subcategory of $\Gr \mathcal P(E)$ we can form the quotient category
\[\QGr \mathcal P(E) := \Gr \mathcal P(E) / \Fdim \mathcal P(E).\]

\begin{theorem}[Paul Smith~\cite{smith2}, Theorem~1.3]\label{operahouse}
Let $E$ be a finite graph  and let $E'$ be the graph without sources and sinks that is obtained by repeatedly removing all sources and sinks  from $E$. Then
\[ \QGr \mathcal P(E) \approx \Gr \LL(E') \approx \Modd \LL(E')_0.\]
\end{theorem}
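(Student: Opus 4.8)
The statement splits as $\QGr \mathcal P(E)\approx \Gr \LL(E')$ together with $\Gr \LL(E')\approx \Modd \LL(E')_0$, and the second equivalence is immediate from the tools already assembled: by construction $E'$ has no sinks, so $\LL(E')$ is strongly $\mathbb Z$-graded by Theorem~\ref{sthfin}, and Dade's theorem (\S\ref{volkhtes5}) then makes $(-)_0\colon \Gr \LL(E')\to \Modd \LL(E')_0$ an equivalence of categories. Thus the whole content lies in $\QGr \mathcal P(E)\approx \Gr \LL(E')$, which I would prove in two stages: a reduction that replaces $E$ by $E'$, and a comparison valid for graphs with no sinks or sources.

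\emph{Stage 1: reduction to graphs with no sinks and no sources.} I would show that deleting a single sink, or a single source, from $E$ does not change $\QGr \mathcal P(E)$ up to equivalence; iterating then gives $\QGr \mathcal P(E)\approx \QGr \mathcal P(E')$. For a sink $w$ one has $\mathcal P(E\setminus\{w\})=\mathcal P(E)/I$ with $I=\mathcal P(E)\,w\,\mathcal P(E)=\mathcal P(E)w$, and because $w$ emits no edges this right ideal is a direct sum of one-dimensional modules (the lines spanned by the individual paths ending at $w$), hence $I\in\Fdim \mathcal P(E)$; a graded ideal contained in $\Fdim$ is annihilated by the quotient functor, so restriction and extension of scalars along $\mathcal P(E)\twoheadrightarrow\mathcal P(E)/I$ descend to mutually inverse equivalences of the quotient categories. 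For a source $v$ the complementary idempotent $\bar v=\sum_{u\ne v}u$ satisfies $\bar v\mathcal P(E)\bar v=\mathcal P(E\setminus\{v\})$ (no path between vertices $\ne v$ can meet $v$, since no edge enters $v$), while the ideal $\mathcal P(E)\bar v\mathcal P(E)$ contains every edge and every vertex $\ne v$, so $\mathcal P(E)/\mathcal P(E)\bar v\mathcal P(E)$ is one-dimensional: $\bar v$ is a full idempotent \emph{modulo} $\Fdim$. A $\Fdim$-analogue of the corner equivalence of Example~\ref{idempogr} (the kernel of $M\mapsto M\bar v$, $\Gr \mathcal P(E)\to\Gr \bar v\mathcal P(E)\bar v$, consists of the modules killed by $\bar v$, i.e. the modules over the finite-dimensional ring $\mathcal P(E)/\mathcal P(E)\bar v\mathcal P(E)$, hence lies in $\Fdim$) then yields $\QGr \mathcal P(E)\approx\QGr \bar v\mathcal P(E)\bar v=\QGr \mathcal P(E\setminus\{v\})$.

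\emph{Stage 2: the case that $E$ has no sinks or sources.} Here I must produce $\QGr \mathcal P(E)\approx\Gr \LL(E)$. The conceptual picture is that $\LL(E)$ is the graded universal localization of $\mathcal P(E)$ obtained by inverting, for each $v\in E^0$, the canonical monomorphism $\sigma_v\colon\bigoplus_{s(\alpha)=v}r(\alpha)\mathcal P(E)(-1)\hookrightarrow v\mathcal P(E)$, $(x_\alpha)_\alpha\mapsto\sum_\alpha\alpha x_\alpha$, whose cokernel is the one-dimensional module $Kv$: forcing the $\sigma_v$ to be isomorphisms creates exactly ghost arrows $\alpha^*$ with $\alpha^*\alpha'=\delta_{\alpha\alpha'}r(\alpha)$ and $\sum_{s(\alpha)=v}\alpha\alpha^*=v$, i.e. the relations of Definition~\ref{LPA} (cf.~(\ref{hterw})). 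Concretely I would work with the right exact graded functor $-\otimes_{\mathcal P(E)}\LL(E)\colon\Gr \mathcal P(E)\to\Gr \LL(E)$: it sends $v\mathcal P(E)$ to $v\LL(E)$, hence hits a generating family of $\Gr \LL(E)$ and is essentially surjective, and it annihilates $\Fdim \mathcal P(E)$ because, iterating the last relation of Definition~\ref{LPA}, $v=\sum_{|p|=n}pp^*$, so any $m$ in a finite-dimensional module with $mv=m$ has $m\otimes 1=\sum_{|p|=n}(mp)\otimes p^*=0$ once $n$ exceeds the spread of degrees. Thus the functor factors through $\QGr \mathcal P(E)$, and it remains to check that the induced functor is fully faithful, for which it suffices to compare Hom-spaces between the generating projectives $\overline{v\mathcal P(E)}$.

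\emph{Expected main obstacle.} The technical heart is this Hom-comparison. On the Leavitt side, $\Hom_{\Gr \LL(E)}(v\LL(E),w\LL(E)(n))=(w\LL(E)v)_n$, spanned by the monomials $pq^*$ with $s(p)=w$, $s(q)=v$, $r(p)=r(q)$, $|p|-|q|=n$. On the path-algebra side one uses that the submodules $v\mathcal P(E)_{\ge k}$ are cofinal among the submodules of $v\mathcal P(E)$ with $\Fdim$-quotient (since $v\mathcal P(E)$ is cyclic) and that $w\mathcal P(E)$ has no nonzero $\Fdim$-submodule (no sinks), so that $\Hom_{\QGr}(\overline{v\mathcal P(E)},\overline{w\mathcal P(E)}(n))=\varinjlim_k\Hom_{\Gr \mathcal P(E)}(v\mathcal P(E)_{\ge k},w\mathcal P(E)(n))=\varinjlim_k\bigoplus_{|p|=k,\,s(p)=v}(w\mathcal P(E)r(p))_{n+k}$; one then identifies this colimit with $(w\LL(E)v)_n$ via $pq^*\mapsto$ ``act by $q^*$ and then left-multiply by $p$,'' and verifies that the identification is natural and is induced by $-\otimes_{\mathcal P(E)}\LL(E)$. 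This bookkeeping, together with the Morita/Gabriel--Popescu formalism for Grothendieck categories (to promote ``fully faithful on a generating family of projectives'' to an equivalence compatible with the shift functors), yields $\QGr \mathcal P(E)\approx\Gr \LL(E)$; composing the three equivalences completes the proof.
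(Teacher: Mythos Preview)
The present paper does not give its own proof of this statement: Theorem~\ref{operahouse} is quoted from Smith~\cite{smith2} (his Theorem~1.3) and then used as a black box in Theorem~\ref{algnihy}. So there is no in-paper argument to compare your proposal against.

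Your outline is nonetheless the standard route to the result and matches Smith's strategy. The second equivalence $\Gr\LL(E')\approx\Modd\LL(E')_0$ is exactly as you say, and your two-stage attack on the first---remove sinks (via the two-sided ideal $\mathcal P(E)w$, which lies in $\Fdim$) and sources (via the corner at $\bar v=1-v$, which is full modulo $\Fdim$) to reduce to an essential graph, then realise $\LL(E)$ as the universal graded localization of $\mathcal P(E)$ inverting the monomorphisms $\sigma_v$ with simple cokernels $Kv$---is precisely how the proof goes. Your colimit description of $\Hom_{\QGr}\big(\overline{v\mathcal P(E)},\overline{w\mathcal P(E)}(n)\big)$ and its identification with $(w\LL(E)v)_n$ is the heart of the matter and is correct.

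The only step I would tighten is the closing appeal to ``fully faithful on a generating family of projectives $\Rightarrow$ equivalence'': as written this needs either that the objects $\overline{v\mathcal P(E)}$ are projective in $\QGr\mathcal P(E)$, or that $-\otimes_{\mathcal P(E)}\LL(E)$ is exact (equivalently, $\LL(E)$ is flat as a left $\mathcal P(E)$-module) so that full faithfulness propagates from generators to arbitrary objects. Both facts are true here and not hard, but one of them should be made explicit; the cleanest packaging is to observe that $\mathcal P(E)\to\LL(E)$ is a flat ring epimorphism, so Gabriel's localization theory identifies $\Gr\LL(E)$ with $\Gr\mathcal P(E)$ modulo the kernel of $-\otimes\LL(E)$, after which your computation shows that kernel is exactly $\Fdim\mathcal P(E)$.
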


The following theorem shows that the graded Grothendieck group can be considered as a complete invariant for the quotient category of path algebras. 
\begin{theorem}\label{algnihy}
Let $E$ and $F$ be graphs with no sinks. Then  $K^{\gr}_0(\LL(E)) \cong K^{\gr}_0(\LL(F))$ as ordered abelian groups if and only if 
$\QGr \mathcal P(E)  \approx \QGr \mathcal P(F)$.
\end{theorem}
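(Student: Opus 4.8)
The plan is to deduce Theorem~\ref{algnihy} by combining Theorem~\ref{operahouse} with the classification of ultramatricial algebras by their ordered $K_0$. First I would reduce the graded-module statement to a statement purely about the zero-component rings. Since $E$ and $F$ have no sinks, $\LL(E)$ and $\LL(F)$ are strongly graded by Theorem~\ref{sthfin}, so by Dade's theorem (see~\S\ref{volkhtes5}) $\Gr \LL(E)\approx \Modd \LL(E)_0$ and likewise for $F$; combined with Theorem~\ref{operahouse} (applied to $E$ and $F$, which equal their own source/sink removals as they have no sinks, though they may have sources, so one should really apply it to $E'$ and $F'$ and note $\Gr\LL(E)\approx\Gr\LL(E')$ via Proposition~\ref{valenjov}), this yields
\[
\QGr \mathcal P(E) \approx \Modd \LL(E')_0, \qquad \QGr \mathcal P(F) \approx \Modd \LL(F')_0.
\]
So the theorem is equivalent to: $K_0(\LL(E')_0)\cong K_0(\LL(F')_0)$ as ordered abelian groups if and only if $\Modd \LL(E')_0 \approx \Modd \LL(F')_0$.

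Next I would invoke the structure of $\LL(E)_0$ recalled in~\S\ref{volkhtes}: it is an ultramatricial (i.e.\ locally matricial) $K$-algebra, being the direct limit of the system $\bigoplus_v \M_{k_n^v}(K)$ with the connecting maps~(\ref{volleyb}). For the ``only if'' direction, an ordered isomorphism $K_0(\LL(E')_0)\cong K_0(\LL(F')_0)$ — \emph{without} requiring the position of the identity to be preserved — gives a Morita equivalence of the two ultramatricial algebras by the Elliott-type classification, concretely~\cite[Corollary~15.27]{goodearlbook} (this is exactly the step already used in the Remark following Proposition~\ref{hgysweet}). A Morita equivalence of rings induces an equivalence of their module categories, hence $\Modd \LL(E')_0 \approx \Modd \LL(F')_0$, and thus $\QGr\mathcal P(E)\approx\QGr\mathcal P(F)$. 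For the ``if'' direction, an equivalence $\Modd \LL(E')_0 \approx \Modd \LL(F')_0$ is in particular a Morita equivalence, and Morita-equivalent rings have isomorphic ordered Grothendieck groups $K_0$ (the equivalence carries finitely generated projectives to finitely generated projectives and respects direct sums, so it induces an isomorphism of the monoids $\V$ and hence of $K_0$ as ordered groups). Transporting back through $K_0(\LL(E')_0)\cong K_0^{\gr}(\LL(E'))\cong K_0^{\gr}(\LL(E))$ (the last isomorphism from Proposition~\ref{valenjov}, which gives $\LL(E)\approx_{\gr}\LL(E')$ and hence equal graded $K_0$) finishes the equivalence.

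The main obstacle I anticipate is bookkeeping around sources rather than any deep point: Theorem~\ref{operahouse} is stated for the source-and-sink-free graph $E'$, whereas the statement of Theorem~\ref{algnihy} is phrased for graphs with no sinks (which may still have sources), so I must consistently pass between $E$ and $E'$ using Proposition~\ref{valenjov} and check that this passage preserves both the ordered group $K_0^{\gr}$ and the category $\Gr\LL$ up to the relevant equivalence. The second subtlety worth flagging is that, unlike Theorem~\ref{thucomingtobal}, here we deliberately do \emph{not} track the order-unit $[\LL(E)]$: Morita equivalence of ultramatricial algebras only needs an ordered isomorphism of $K_0$, not a pointed one, which is precisely why the hypothesis of this theorem is weaker. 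Everything else — strong gradedness, Dade's theorem, and the fact that equivalences of module categories induce ordered $K_0$-isomorphisms — is either quoted from the excerpt or entirely routine.
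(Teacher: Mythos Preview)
Your proposal is correct and follows essentially the same route as the paper's own proof: both reduce via Dade's theorem and Theorem~\ref{sthfin} to the zero-component, invoke the ultramatricial structure of $\LL(E)_0$ together with \cite[Corollary~15.27]{goodearlbook} to pass between ordered $K_0$-isomorphism and Morita equivalence, use Proposition~\ref{valenjov} to handle sources, and finish with Theorem~\ref{operahouse}. The only difference is cosmetic ordering of the steps; your identification of the source-removal bookkeeping and the deliberate absence of the order-unit are exactly the points the paper relies on as well.
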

\begin{proof}
Using Dade's theorem (see~\S\ref{volkhtes5}), and the fact that $\LL(E)$ and $\LL(F)$ are strongly graded (Theorem~\ref{sthfin}), as ordered abelian groups, $K^{\gr}_0(\LL(E)) \cong K^{\gr}_0(\LL(F))$  if and only if 
$K_0(\LL(E)_0) \cong K_0(\LL(F)_0)$. On the other hand, since $\LL(E)_0$ and $\LL(F)_0$ are ultramatricial algebras (see~\S\ref{volkhtes}), by~\cite[Coroallary~15.27]{goodearlbook} $K_0(\LL(E)_0) \cong K_0(\LL(F)_0)$ as ordered abelian groups if and only if 
$\LL(E)_0$ is Morita equivalent to $\LL(F)_0$. Furthermore, a repeated application of Proposition~\ref{valenjov} shows that repeatedly removing all the sources from $E$ and $F$ (call the new graphs obtained in this way with no sources $E'$ and $F'$, respectively) do not alter the corresponding categorise modulo equivalence, i.e., $\LL(E) \approx_{\gr} \LL(E')$ and $\LL(F) \approx_{\gr} \LL(F')$, which implies 
$\LL(E)_0 \approx \LL(E')_0$ and $\LL(F)_0 \approx \LL(F')_0$. 

Now Combining this with Theorem~\ref{operahouse} we have $K^{\gr}_0(\LL(E)) \cong K^{\gr}_0(\LL(F))$ if and only if $\QGr \mathcal P(E)  \approx \QGr \mathcal P(F)$.
\end{proof}

Theorem~\ref{algnihy} gives the following corollary which is the main result of~\cite{smith3}.

\begin{corollary}\label{hghghg}
Let $E$ and $F$ be graphs with no sinks. If $A_E$ is shift equivalent to $A_F$ then  $\QGr \mathcal P(E)  \approx \QGr \mathcal P(F)$. 
\end{corollary}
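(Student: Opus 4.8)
The plan is to deduce Corollary~\ref{hghghg} as an essentially immediate consequence of Theorem~\ref{algnihy} together with Corollary~\ref{h99}. The key observation is that Corollary~\ref{h99} already tells us that shift equivalence of $A_E$ and $A_F$ produces an order preserving $\mathbb Z[x,x^{-1}]$-module isomorphism $K^{\gr}_0(\LL(E)) \cong_{\gr} K^{\gr}_0(\LL(F))$, and in particular an isomorphism $K^{\gr}_0(\LL(E)) \cong K^{\gr}_0(\LL(F))$ of ordered abelian groups (simply forgetting the module structure). This is exactly the hypothesis needed to invoke the ``only if'' direction of Theorem~\ref{algnihy}.

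So the proof is two sentences. First I would note that both $E$ and $F$ have no sinks by hypothesis, so Theorem~\ref{algnihy} applies to them. Then I would say: assume $A_E$ is shift equivalent to $A_F$; by Corollary~\ref{h99} there is an order preserving $\mathbb Z[x,x^{-1}]$-module isomorphism $K^{\gr}_0(\LL(E)) \cong_{\gr} K^{\gr}_0(\LL(F))$, which in particular is an isomorphism of ordered abelian groups; hence by Theorem~\ref{algnihy}, $\QGr \mathcal P(E) \approx \QGr \mathcal P(F)$.

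There is essentially no obstacle here; the only thing worth being careful about is the direction of implication. We use only that shift equivalence $\Rightarrow$ ordered-group isomorphism of the graded $K_0$ (one direction of Corollary~\ref{h99}) and that ordered-group isomorphism $\Rightarrow$ equivalence of the quotient categories (one direction of Theorem~\ref{algnihy}); we do not need the converses, which is consistent with the statement being a one-directional implication (``if \dots then \dots'') rather than an ``if and only if.'' It is also worth remarking — though not strictly needed for the proof — that the result recovers the main theorem of Smith~\cite{smith3}, now obtained through the graded $K$-theory rather than directly, which is the point of including it as a corollary.

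One could phrase the proof as follows.
\begin{proof}
By hypothesis $E$ and $F$ have no sinks, so Corollary~\ref{h99} and Theorem~\ref{algnihy} both apply. If $A_E$ is shift equivalent to $A_F$, then by Corollary~\ref{h99} there is an order preserving $\mathbb Z[x,x^{-1}]$-module isomorphism $K^{\gr}_0(\LL(E)) \cong_{\gr} K^{\gr}_0(\LL(F))$; forgetting the $\mathbb Z[x,x^{-1}]$-module structure, this is in particular an isomorphism of ordered abelian groups. Theorem~\ref{algnihy} then gives $\QGr \mathcal P(E) \approx \QGr \mathcal P(F)$.
\end{proof}
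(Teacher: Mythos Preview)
Your proof is correct and takes essentially the same approach as the paper. The only cosmetic difference is that the paper cites the ingredients of Corollary~\ref{h99} directly (Krieger's theorem plus Lemma~\ref{mmpags}) rather than Corollary~\ref{h99} itself, before applying Theorem~\ref{algnihy}.
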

\begin{proof}
Since $A_E$ is shift equivalent to $A_F$, the Krieger's dimension groups associated with $E$ and $F$ are isomorphism. But by 
Lemma~\ref{mmpags} the Krieger dimension group coincides with the graded Grothendieck group, and so the corollary follows from Theorem~\ref{algnihy}.
\end{proof}

The converse of Corollary~\ref{hghghg} is not valid as the following example shows. Let $E$ be a graph with one vertex and two loops and $F$ be a graph with one vertex and four loops. Then $K^{\gr}_0(E)=\mathbb Z[1/2]$ and $K^{\gr}_0(F)=\mathbb Z[1/2]$. So the identity map gives an order preserving group isomorphism between the $K^{\gr}_0$-groups. (Note that this isomorphism is not $\mathbb Z[x,x^{-1}]$-module isomorphism.) Then Theorem~\ref{algnihy} shows that $\QGr \mathcal P(E)  \approx \QGr \mathcal P(F)$. However one can readily see that $A_E$ is not shift equivalent to $A_F$.


\section{Appendix}

In Proposition~\ref{hgysweet} we observed that the Leavitt path algebras associated to out-splitting and in-splitting graph are graded Morita equivalent to the Leavitt path algebra of the original graph. These have been shown separately. There is a unified manner to do this, using the Ashton and Bates notion of elementary shift equivalent defined for two graphs. Namely, if two graphs $E_1$ and $E_2$ are elementary shift equivalent via a graph $E_3$, then $\LL(E_1)$ and $\LL(E_2)$ are graded Morita equivalent. It is not difficult to see that the out-splitting and in-splitting are elementary shift equivalent to the original graph~\cite{batespask}. However, the price to be paid for this unified approach is, due to the construction of $E_3$, we need to change the grading of Leavitt path algebras and the eventual graded Morita equivalent is $(1/2)\mathbb Z$-graded. 

The following definition given in~\cite{ashton,bates02} for directed graphs with no sinks, provides graph theoretical conditions, when the adjacency matrices of two graphs are elementary shift equivalent 
(see also~\cite[p.~227]{lindmarcus}).

\begin{deff}\label{ttjeaong}
Let $E_i = (E^0_i,E^1_i,r_i,s_i)$, for $i = 1,2$, be graphs. Suppose there is a graph $E_3 = (E^0_3, E^1_3, r_3, s_3)$ such that
\begin{enumerate}
\item $E^0_3 =E^0_1 \cup E^0_2$  and $E^0_1 \cap E^0_2 =\emptyset$,

\item $E^1_3 =E^1_{12} \cup E^1_{21}$, where $E^1_{ij} := \{ e \in E^1_3 \mid s_3(e) \in E^0_i, r_3(e)\in E^0_j \}$,

\item for $i = \{1, 2\}$, there are range and source-preserving bijections $\theta_i : E^1_i \rightarrow  E^2_3 (E^0_i , E^0_i)$, 
where for $i \in  \{1, 2\}$,  $E^2_3(E^0_i, E^0_i) := \{\alpha \in E^2_3 \mid s_3(\alpha) \in E^0_i, r_3(\alpha) \in E^0_i \}.$
\end{enumerate}
Then we say that $E_1$ and $E_2$ are {\it elementary strong shift equivalent} $(E_1 \sim_{ES} E_2)$ {\it via} $E_3$.
\end{deff}

One can prove that $E_1 \sim_{ES} E_2$ via a graph $E_3$ if and only if $A_{E_1}  \sim_{ES} A_{E_2}$ (see~\cite[Proposition~3.10]{ashton}).  

The equivalence relation $\sim_S$ on directed graphs generated by elementary strong shift equivalence is called {\it strong shift equivalence}. In~\cite[Theorem~5.2]{bates02} it was shown that if two row-finite graphs are strongly shift equivalent then their associated
graph $C^*$-algebras are strongly Morita equivalent. We establish a similar statement in the setting of graded Leavitt path algebras. 

In the following by $(1/2) \mathbb Z$ we denote the cyclic subgroup of rational numbers $\mathbb Q$ generated by $1/2$. 

\begin{theorem}\label{cafejen1}
Let $E_1$ and $E_2$ be graphs with no sinks which are elementary strong shift equivalent via the graph $E_3$. 
Then $\LL(E_1)$ and $\LL(E_2)$ are $(1/2) \mathbb Z$-graded Morita equivalent. 
\end{theorem}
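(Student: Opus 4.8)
The plan is to reduce the statement to the case of in-splitting and out-splitting, which were already handled in Proposition~\ref{hgysweet}(1), but now passing through the bipartite-like intermediate graph $E_3$ of Definition~\ref{ttjeaong} and being careful about the grading. Concretely, given graphs $E_1$ and $E_2$ with no sinks that are elementary strong shift equivalent via $E_3$, I would first observe that $E_3$ itself has no sinks: every vertex of $E_3$ in $E_i^0$ emits an edge (into $E_j^0$) because, via the bijection $\theta_i:E_i^1\to E_3^2(E_i^0,E_i^0)$ and the fact that $E_i$ has no sinks, there are paths of length two in $E_3$ starting at each vertex of $E_i^0$. Hence $\LL(E_3)$ is strongly graded in its natural $\Z$-grading by Theorem~\ref{sthfin}.

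Next I would compare $\LL(E_1)$ with $\LL(E_3)$. The key point is that $E_1$ is obtained from $E_3$ by ``contracting'' length-two paths through $E_2^0$: a Leavitt-algebra homomorphism $\pi:\LL(E_1)\to\LL(E_3)$ should send a vertex $v\in E_1^0$ to $v$, and an edge $e\in E_1^1$ to the length-two path $\theta_1(e)=e_1 e_2$ in $E_3$ (with $s_3(e_1)\in E_1^0$, $r_3(e_1)=s_3(e_2)\in E_2^0$, $r_3(e_2)\in E_1^0$), and $e^*$ to $(e_1e_2)^*=e_2^*e_1^*$. One checks the Leavitt relations for this $E_1$-family in $\LL(E_3)$ using the relations in $\LL(E_3)$ together with the bijectivity of $\theta_1$; injectivity of $\pi$ then follows from the graded uniqueness theorem~\cite[Theorem~4.8]{tomforde} (once the grading is set up so that $\pi$ is graded — see below). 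The image $\pi(\LL(E_1))$ equals $p\LL(E_3)p$ where $p=\sum_{v\in E_1^0}v$, which is a full homogeneous idempotent of $\LL(E_3)$ because the hereditary saturated closure of $E_1^0$ in $E_3^0$ is all of $E_3^0$ (every vertex in $E_2^0$ receives an edge from $E_1^0$, or is reached via the saturation condition), so by~\cite[Theorem~5.3]{amp} the ideal generated by $\{v\mid v\in E_1^0\}$ is all of $\LL(E_3)$. Then Example~\ref{idempogr} (or Theorem~\ref{grmorim11}(4)) gives a graded Morita equivalence $\LL(E_1)\approx_{\gr}\LL(E_3)$. By symmetry, using $\theta_2$ and $p'=\sum_{v\in E_2^0}v$, one gets $\LL(E_2)\approx_{\gr}\LL(E_3)$, and composing yields $\LL(E_1)\approx_{\gr}\LL(E_2)$.

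The main obstacle — and the reason the conclusion is stated for the $(1/2)\Z$-grading rather than the $\Z$-grading — is that $\pi$ as defined above is \emph{not} graded for the standard $\Z$-gradings: it sends a degree-$1$ generator $e$ of $\LL(E_1)$ to a degree-$2$ element $e_1e_2$ of $\LL(E_3)$. The fix is to rescale the grading: give $\LL(E_3)$ its natural $\Z$-grading but regard it inside the $(1/2)\Z$-graded algebra obtained by reassigning $\deg(f)=1/2$, $\deg(f^*)=-1/2$ for each $f\in E_3^1$, while keeping $\LL(E_1)$ and $\LL(E_2)$ $\Z$-graded (equivalently, viewed as $(1/2)\Z$-graded with the edges still in degree $1$). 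With this convention $\pi:\LL(E_1)\to\LL(E_3)$ is degree-preserving, $p$ still has degree $0$, and the corner $p\LL(E_3)p$ is a $(1/2)\Z$-graded ring graded-isomorphic to $\LL(E_1)$; the same works for $E_2$. One must check that Theorem~\ref{sthfin}, Example~\ref{idempogr}, the graded uniqueness theorem and Theorem~\ref{grmorim11} all apply with $\Gamma=(1/2)\Z$ (they do — these are stated for arbitrary abelian $\Gamma$, and $\LL(E_3)$ with the half-integer grading is still strongly $(1/2)\Z$-graded since $\LL(E_3)_{1/2}\LL(E_3)_{-1/2}\supseteq\LL(E_3)_0$ follows from relation~(4) of Definition~\ref{LPA}). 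Assembling these pieces gives $\LL(E_1)\approx_{\gr}\LL(E_2)$ as $(1/2)\Z$-graded algebras, completing the proof.
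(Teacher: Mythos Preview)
Your proof is correct and follows essentially the same approach as the paper: construct a graded homomorphism $\LL(E_i)\to\LL(E_3)$ via $v\mapsto v$ and $e\mapsto\theta_i(e)$, identify its image with the corner $p\LL(E_3)p$ for the full homogeneous idempotent $p=\sum_{v\in E_i^0}v$, and use the half-integer grading on $\LL(E_3)$ to make the map degree-preserving. Your opening sentence about ``reducing to in-splitting and out-splitting'' is a misdescription of what you actually do---you work directly with $E_3$ and never invoke Proposition~\ref{hgysweet}(1)---but the argument itself is right; also note that the strong grading of $\LL(E_3)$ is not actually needed anywhere in the proof, and that the inclusion $E_2^0\subseteq$ (hereditary saturated closure of $E_1^0$) comes from saturation (every $w\in E_2^0$ emits, and emits only into $E_1^0$), not from hereditariness.
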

\begin{proof}
First observe that if $E_1\sim_{ES} E_2$ via $E_3$ and $E_1$ and $E_2$ have no sinks, then $E_3$ does not have sinks either. 

We first give a $(1/2)\mathbb Z$-graded structure to the Leavitt path algebras $\LL(E_i)$, $i=1,2,3$. We then show that $\LL(E_1)$ and $\LL(E_2)$ are $(1/2)\mathbb Z$-graded Morita equivalent to $\LL(E_3)$. This implies that $\LL(E_1)$ is $(1/2)\mathbb Z$-graded Morita equivalent to $\LL(E_2)$. 

Let $\Gamma=(1/2)\mathbb Z$ and set $\deg(v)=0$, for $v\in E_i^0$, $\deg(\alpha)=1$ and $\deg(\alpha^*)=-1$ for $\alpha \in E_i^1$, where $i=1,2$.  By \S\ref{poidget}, we then obtain a natural $(1/2)\mathbb Z$-grading on $\LL(E_1)$ and $\LL(E_2)$ with support $\mathbb Z$. 

Furthermore, set $\deg(v)=0$, for $v\in E_3^0$, $\deg(\alpha)=1/2$ and $\deg(\alpha^*)=-1/2$ for $\alpha \in E_3^1$. Thus we get a natural $(1/2)\mathbb Z$-grading on $\LL(E_3)$, with support $(1/2)\mathbb Z$.

Next we construct a $(1/2)\mathbb Z$-graded ring homomorphism $\phi: \LL(E_1)\rightarrow \LL(E_3)$ such that $\phi(\LL(E_1))=p\LL(E_3)p$, where $p=\sum_{v\in E_1^0} v \in \LL(E_3)$. Define $\phi(v)=v$ for $v\in E_1^0$ and $\phi(l)=ef$, where $l\in E_1^1$ (and is of degree 1)  and $e f$ is a path of length $2$ in $E_3$  (and is of degree $1$) assigned uniquely to $l$ via $\theta_1$ in Definition~\ref{ttjeaong}(3). We check that the set 
$\{\phi(v),\phi(l)\,  | \,  v\in E_1^0, l\in E_1^1\}$ is an $E$-family in $\LL(E_3)$. 

Let $v\in E_1^0$ and $\phi(v)=v \in E_3^0$. Then 
\begin{align*}
v&=\sum_{\{e\in E_3^1 \mid s_3(e)=v\}}ee^*\\
&=\sum_{\{ef\in E_3^2(E_1^0,E_1^0) \mid s_3(ef)=v\}}ef(ef)^*\\
&=\sum_{\{l\in E_1^1 \mid s_1(l)=v\}}\phi(l)\phi(l)^*
\end{align*}
where the last equality uses condition (c) in Definition~\ref{ttjeaong}. On the other hand for $l \in E_1^1$, 
\[
\phi(l)^*\phi(l)=(ef)^*(ef)=f^*e^*ef=f^*f=r(f)=\phi(r(l)),   
\] since the map $\theta_1$ in the Definition~\ref{ttjeaong} is range-preserving bijection. The rest of the relations of an $E$-family is easily check and thus there is a map $\phi: \LL(E_1)\rightarrow \LL(E_3)$. 

Clearly $\phi(\LL(E_1)) \subseteq p\LL(E_3)p$. Since  $p\LL(E_3)p$ is generated by elements of the form $p \alpha \beta^* p$, where $\alpha,\beta \in E_3^*$, we check that $p \alpha \beta^* p \in \phi(\LL(E_1))$. 
 Clearly either $p \alpha \beta^* p=\alpha\beta^*$ if $s_3(\alpha),s_3(\beta)\in E_1^0$ or $p \alpha \beta^* p=0$ otherwise. 
Furthermore, for $\alpha \beta^*\not =0$, we should have $r_3(\alpha)=r_3(\beta)$. Thus for an element $p \alpha \beta^* p\not =0$, we have either $s_3(\alpha),s_3(\beta), r_3(\alpha), r_3(\beta)\in E_1^0$ or $s_3(\alpha),s_3(\beta)\in E_1^0$ and $r_3(\alpha), r_3(\beta)\in E_2^0$. In the first case, by the construction of $E_3$, the length of $\alpha$ and $\beta$ are even and since $\theta_1$ is bijective, $\alpha$ and $\beta$ are in the images of the map $\phi$. In the second case, since $E_3$ has no sinks, we have
\[\alpha\beta^*=\sum_{\{e\in E_3^1 \mid s(e)=r(\alpha)\}} \alpha e e^* \beta^* =\sum_{\{e\in E_3^1 \mid s(e)=r(\alpha)\}} \alpha e  (\beta e)^*.\]
Now, again by the construction of $E_3$,  for each $e$, $s_3(\alpha e),s_3(\beta e), r_3(\alpha e), r_3(\beta e)\in E_1^0$ and by the first case, they are in the image of the map $\phi$. So $\phi(\LL(E_1))=p\LL(E_3)p$. 

Since $\phi(v)=v \not = 0$ (see~\cite[Lemma~1.5]{goodearl}) by the graded uniqueness theorem~\cite[Theorem~4.8]{tomforde} (which is valid for $(1/2)\mathbb Z$-graded Leavitt path algebras as well)  $\phi$ is injective. Thus  $\LL(E_1)\cong_{\gr} p\LL(E_3)p$. This immediately implies $\LL(E_1)$ is  $(1/2)\mathbb Z$-graded Morita equivalent to $p\LL(E_3)p$. On the other hand, the (graded) ideal generated by $p=\sum_{v\in E_1^0} v$ in $\LL(E_3)$ coincides with the (graded) ideal generated by $\{v \mid v\in E_1^0\}$. But the smallest hereditary and saturated subset of $E_3^0$ containing $E_1^0$ is $E_3^0$. Thus by~\cite[Theorem~5.3]{amp} the ideal generated by $E_1^0$ is $\LL(E_3)$. This shows $p$ is a full homogeneous idempotent in $\LL(E_3)$. 
Thus $p\LL(E_3)p$ is graded Morita equivalence to $\LL(E_3)$ (see Example~\ref{idempogr}). Putting these together we get $\LL(E_1) \approx_{\gr} \LL(E_3)$. In a similar manner $\LL(E_2) \approx_{\gr} \LL(E_3)$. Thus $\LL(E_1) \approx_{\gr} \LL(E_2)$ as $(1/2)\mathbb Z$-graded rings. 
 This finishes the proof. 
\end{proof}

\begin{remark}
Since throughout this paper we are interested in purely infinite simple Leavitt path algebras, which are (graded) simple, we could have specialised Theorem~\ref{cafejen1} to graphs associated to these algebras, and instead of using~\cite[Lemma~1.5]{goodearl} and~\cite[Theorem~5.3]{amp} in the proof, we could only use the simplicity of the algebras.

In~\cite{flowa}, it was shown that if $E$ is a finite graph with no sources and sinks such that $\LL(E)$ is simple, then the Leavitt path algebras obtained from the in-splittings of $E$ (\cite[Proposition~1.11]{flowa}) or out-splittings of $E$ (\cite[Proposition~1.14]{flowa}) are Morita equivalent to $\LL(E)$.  For a graph $E$ with no sinks, it was shown that an in-splitting of a graph $E$~\cite[Proposition~6.2]{batespask} or an out-splitting of $E$~\cite[Proposition~6.3]{batespask} is elementary shift equivalent to the graph $E$. 
Now using Theorem~\ref{cafejen1} and Theorem~\ref{grmorim11}, we can obtain these results in a unified manner. 
\end{remark}

\end{document}